\numberwithin{equation}{section}
\newcolumntype{C}{>{$}c<{$}} 
\theoremstyle:=definition,remark,plain\do{%
    \expandafter\g@addto@macro\csname th@\theoremstyle\endcsname{%
      \addtolength\thm@preskip{.5\baselineskip plus .2\baselineskip minus .2\baselineskip}
      \addtolength\thm@postskip{.5\baselineskip plus .2\baselineskip minus .2\baselineskip}
    }%
  }
\renewcommand{\cong}{\simeq}
\DeclareMathOperator{\id}{id}
\DeclareMathOperator{\tr}{tr} 
\DeclarePairedDelimiter{\brac}{\lparen}{\rparen}   
\DeclarePairedDelimiter{\sqbrac}{\lbrack}{\rbrack} 
\DeclarePairedDelimiterX{\comm}[2]{\lbrack}{\rbrack}{#1 , #2}  
\DeclarePairedDelimiterX{\acomm}[2]{\lbrace}{\rbrace}{#1 , #2} 
\DeclarePairedDelimiterX{\inner}[2]{\langle}{\rangle}{#1 , #2} 
\DeclarePairedDelimiterX{\super}[2]{\lparen}{\rparen}{#1 \delimsize\vert \mathopen{} #2} 
\newcommand{\fld}[1]{\mathbb{#1}}    
\newcommand{\alg}[1]{\mathfrak{#1}}  
\newcommand{\grp}[1]{\mathsf{#1}}    
\newcommand{\aalg}[1]{\mathsf{#1}}   
\newcommand{\Mod}[1]{\mathcal{#1}}   
\newcommand{\VOA}[1]{\mathsf{#1}}    
\newcommand{\ZZ}{\fld{Z}}
\newcommand{\CC}{\fld{C}}
\newcommand{\ag}{\alg{g}}
\newcommand{\ah}{\alg{h}}
\newcommand{\al}{\alg{l}}
\newcommand{\ap}{\alg{p}}
\newcommand{\ab}{\alg{b}}
\newcommand{\rlat}{\grp{Q}}                                        
\newcommand{\aai}{\aalg{I}}
\newcommand{\aau}{\aalg{U}}
\newcommand{\aaw}{\aalg{W}}
\newcommand{\aac}{\aalg{C}}
\newcommand{\aat}{\aalg{T}}
\newcommand{\envalg}[1]{\aau\brac{#1}}             
\newcommand{\uaffvoa}[2]{\VOA{V}_{#1} \brac{#2}}                
\newcommand{\affvoa}[2]{\VOA{L}_{#1} \brac{#2}}                 
\newcommand{\zhu}[1]{\mathsf{Zhu}\sqbrac*{#1}}                              
\newcommand{\mdc}{\Mod{C}}
\newcommand{\mdh}{\Mod{H}}
\newcommand{\mdl}{\Mod{L}}
\newcommand{\mdm}{\Mod{M}}
\newcommand{\mdn}{\Mod{N}}
\newcommand{\hw}{highest weight } 
\theoremstyle{plain}
\newtheorem{theorem}{Theorem}
\newtheorem{corollary}[theorem]{Corollary}
\newtheorem{lemma}[theorem]{Lemma}
\newtheorem{proposition}[theorem]{Proposition}
\newtheorem*{definition}{Definition}
\newtheorem*{remark}{Remark}
\DeclareMathOperator{\Supp}{supp}
\newcommand{\supp}[1]{\Supp(#1)} 
\newcommand{\lam}{\lambda}
\newcommand{\Lam}{\Lambda}
\newcommand{\ch}{\mathrm{ch}}
\newcommand{\ann}[2]{\mathrm{Ann}_{#2}(#1)} 
\newcommand{\ug}{\envalg{\alg{g}}} 
\newcommand{\fg}{\bar{\ag}}
\newcommand{\fh}{\bar{\ah}}
\newcommand{\fb}{\bar{\ab}}
\newcommand{\fp}{\bar{\ap}}
\newcommand{\fl}{\bar{\al}}
\newcommand{\ufg}{\envalg{\fg}}
\newcommand{\fDelta}{\bar{\Delta}}
\newcommand{\fO}{\bar{\mathcal{O}}}
\newcommand{\fLam}{\bar{\Lambda}}
\newcommand{\frho}{\bar{\rho}}
\newcommand{\fmdv}{\bar{\mdv}}
\newcommand{\fmdl}{\bar{\mdl}}
\newcommand{\fmdh}{\bar{\mdh}}
\newcommand{\fmdm}{\bar{\mdm}}
\newcommand{\mdr}{\Mod{R}}
\newcommand{\fmdc}{\bar{\mdc}}
\newcommand{\wlat}{\grp{P}}                 
\newcommand{\fwlat}{\bar{\wlat}}
\newcommand{\frlat}{\bar{\rlat}}
\newcommand{\hbrst}{H^{\mathrm{BRST}}}
\newcommand{\mdv}{\Mod{V}}
\newcommand{\Znn}{\mathbb{Z}_{\geq0}}
\newcommand{\cspan}{\mathrm{span}_{\CC}}
\newcommand{\Cl}{Cl}
\newcommand{\comp}{\aalg{C}}
\newcommand{\cF}{\mathcal{F}}
\newcommand{\wuniva}{W^k(\fg,f_{\alpha_\ell})}
\newcommand{\wsimpa}{W_k(\fg,f_{\alpha_\ell})}
\newcommand{\wfina}{W^{\mathrm{fin}}(\fg,f_{\alpha_\ell})}
\newcommand{\wunivamod}{\wuniva\mbox{-mod}}
\newcommand{\cO}{\mathcal O}
\newcommand{\hlie}{H^{\mathrm{Lie}}}
\newcommand{\semi}{\mathrm{s.s.}}
\newcommand{\adm}{\mathrm{Adm}}
\newcommand{\sing}{\mathrm{Sing}}
\newcommand{\singtwo}{\mathrm{sing}}
\renewcommand\author@andify{%
  \nxandlist {\unskip ,\penalty-1 \space\ignorespaces}%
    {\unskip {} \@@and~}%
    {\unskip \penalty-2 \space \@@and~}%
}
\begin{document}

\title[]{Relaxed highest-weight  modules III: Character formulae}

\author[K~Kawasetsu]{Kazuya Kawasetsu}
\address[Kazuya Kawasetsu]{
Priority Organization for Innovation and Excellence\\
Kumamoto University\\
Kumamoto, Japan, 860-8555.
}
\email{kawasetsu@kumamoto-u.ac.jp}

\begin{abstract}
This is the third of a series of articles devoted to the study of relaxed
\hw  modules over vertex operator algebras.
Relaxed \hw  modules over affine vertex algebras associated to higher rank Lie algebras 
$A_\ell$ are extensively studied.
In particular, the string functions of simple relaxed \hw  modules
  whose top spaces are   simple cuspidal $A_\ell$-modules are shown to be the quotients by a power of the Dedekind eta series of the $q$-characters of simple ordinary modules over affine W-algebras associated with the minimal nilpotent elements of $A_\ell$.
\end{abstract}

\maketitle

\onehalfspacing

\section{Introduction} \label{sec:intro}


{\em Relaxed \hw  modules} are generalisation of \hw  modules and  
are believed to be building blocks of many logarithmic conformal field theory (CFT).
In particular, they are essential in the study of so-called {\em standard Verlinde formulas} for logarithmic CFT
launched in \cite{CR1}.

The relaxed \hw  modules over ``rank 1'' vertex algebras have been extensively studied
in both mathematics and physics litertures,
e.g.~\cite{Ada19,AM,CKLR,CreMod13,FST,G,KawRel18,MO,RW2}. 
(Recall that vertex algebras are chiral algebras of CFTs.)
Moreover, many fusion rules for bosonic ghost CFT predicted by the standard Verlinde formula 
and Nahm-Gaberdiel-Kausch algorithm in \cite{RW} were proved in \cite{AP}. 

Now investigation of relaxed \hw  modules over higher rank vertex algebras
is desired towards establishing  theory of higher rank logarithmic CFT.

Let $\uaffvoa{k}{\fg}$ be the universal {\em affine vertex algebra} of non-critical level $k$
associated to the finite-dimensional simple Lie algebra $\fg$ of rank $\ell$ with the Cartan subalgebra $\fh$ of $\fg$.
Recall that modules over $\uaffvoa{k}{\fg}$ are smooth modules of level $k$ over the affine Kac-Moody algebra
$\ag$ associated to $\fg$.
Let $\ah$ be the Cartan subalgebra of $\ag$ associated to $\fh$
and $\mdm$ a {\em weight} $\uaffvoa{k}{\fg}$-module, a module which is semisimple over $\ah$ 
with finite-dimensional weight spaces.
A {\em relaxed \hw  vector} is a weight vector $v\in\mdm$ such that
$(\fg[t]t).v=0$.
If $\mdm$ is generated by a relaxed \hw  vector, $\mdm$ is called 
a relaxed \hw  module \cite{FST,RW2}.

Let $\affvoa{k}{\fg}$ be the simple quotient of $\uaffvoa{k}{\fg}$.
In the second of this series \cite{KawRel19}, classification of simple relaxed \hw  modules
over $\affvoa{k}{\fg}$ of arbitrary rank is established subject to that of
simple \hw  modules.
Note that for admissible levels, 
simple \hw  modules over $\affvoa{k}{\fg}$ are classified in \cite{A16}. 
(See also \cite{AFL} for a study of positive energy modules including relaxed \hw  modules when $\fg=A_2$ and $k$ is admissible.)
However, the characters of such modules are not known except for a few cases
(see \cite{Ada16} and \cite{KRW} for $\fg=A_2$ and $k=-3/2$).

In this paper, we study characters of relaxed \hw  modules over affine vertex algebras 
associated to $\fg=A_\ell$ for all $\ell\geq 1$.

Let us explain our results briefly.
We say that a relaxed \hw  module over $\uaffvoa{k}{\fg}$ 
 has a {\em cuspidal top space}
if its top space is a {\em cuspidal module}
over $\fg$.
A cuspidal module over $\fg$ is by definition a weight $\fg$-module not parabolically induced from a module
over some Levi subalgebra $\fl\subsetneq \fg$.
The simple cuspidal modules are completely classified in \cite{MatCla00} and are fundamental in the classification of all weight modules over $\fg$ \cite{FerLie90,MatCla00}.
Note that by the result of \cite{FerLie90}, simple cuspidal modules exist (if and) only if $\fg=A_\ell$ or $C_\ell$.
For simplicity, we only consider relaxed \hw  modules with cuspidal top spaces in this paper.

Let $\fg$ be a finite-dimensional simple Lie algebra of type $A_\ell$
with fixed Cartan and Borel subalgebras $\fh\subset \fb\subset \fg$.
Let $\frlat$ and $\fwlat^+$ be the root lattice and set of dominant integral weights of $\fg$
with simple roots $\alpha_1,\ldots\alpha_\ell$ and fundamental weights 
$\omega_1\ldots,\omega_\ell$, as usual.
We denote by $\fmdl_\lam$ the simple \hw  $\fg$-module of highest weight $\lam\in\fh^*$.
Identifying $\fh^*\cong\fh$, the semisimple element $x=\omega_\ell$ of $\fg$ induces the grading
$$
\fg=\fg_{-1}\oplus \fg_0\oplus \fg_1,\qquad \fg_n:=\{v\in\fg\,|\,[x,v]=nv\}
$$
on $\fg$. 

Let $\cO_k$ be the BGG category of $\uaffvoa{k}{\fg}$ and 
$\cO^0_k$ the parabolic BGG category of $\uaffvoa{k}{\fg}$, the full subcategory
of $\cO_k$ of all objects integrable 
with respect to $\fg_0$.
The simple objects of $\cO^0_k$ are the simple \hw  $\uaffvoa{k}{\fg}$-modules
$\mdl_\Lam$ of highest weights $\Lam\in \wlat^{0,+}_k$, where
$\wlat^{0,+}_k$ is the set of all elements of $\ah^*$ of level $k$ which are integrable with respect to
$\fg_0$.
Let $\fLam$ denote the usual projection of $\Lam\in\ah^*$ onto $\fh^*$.

The main results of this paper lie in certain connections between simple relaxed \hw  modules 
with cuspidal top spaces and ordinary modules over the universal {\em minimal W-algebra} $\wuniva$.

Recall {\em Mathieu's twisted localisation functor} $\Phi_{\mu+\frlat}(\cdot)$.
It sends all infinite-dimensional \hw  modules over $\fg$, integrable with respect to $\fg_0$,
 to cuspidal $\fg$-modules of weight support $\mu+\frlat$
 and finite-dimensional modules to zero.
We shall apply the functor to modules in category $\cO^0_k$ over $\uaffvoa{k}{\fg}$. 
Let $\lam$ be an element of $\fh^*$ which is not dominant integral but integrable with respect to $\fg_0$.
Recall the set $\sing(\lam)$ of all $\mu+\frlat\in\fh^*/\frlat$ 
such that  $\Phi_{\mu+\frlat}(\fmdl_\lam)$ is not simple.
$\sing(\lam)$ is completely determined in \cite{MatCla00} and 
  it is a union of $\ell+1$ distinct codimension one cosets of $\fh^*/\frlat$.  

Let us also recall the  BRST ``$-$'' reduction functor $\hbrst_0(\cdot)$ from the category $\cO^0_k$ 
to the category of ordinary modules over $\wuniva$.
By \cite{Aii}, the collection of $\hbrst_0(\mdl_\Lam)$ with 
 $\Lam\in\wlat^{0,+}_k$ such that $\fLam\not\in\fwlat^+$ is the complete list
 of simple ordinary modules over $\wuniva$.
 If $\Lam\in\wlat^{0,+}_k$ with $\fLam\in\fwlat^+$, then $\hbrst_0(\mdl_\Lam)=0$.
The main theorem of the present paper is the following.

\begin{theorem}\label{sec:thmsummary1}
\begin{enumerate}
\item The list of all $\Phi_{\mu+\frlat}(\mdl_\Lam)$ with (1) $\Lam\in\wlat^{0,+}_k$ such that $\fLam\not\in\fwlat^+$
and (2) $\mu+\frlat\in(\fh^*/\frlat)\setminus\sing(\fLam)$, is the complete list of simple relaxed \hw  modules with cuspidal top spaces
over $\uaffvoa{k}{\fg}$.
If $\Lam\in\wlat^{0,+}_k$ with $\fLam\in\fwlat^+$, then $\Phi_{\mu+\frlat}(\mdl_\Lam)=0$
for all $\mu\in\fh^*$.


\item For any $\Lam\in\wlat^{0,+}_k$ and $\mu\in\fh^*$, we have the formula
$$
\ch[\Phi_{\mu+\frlat}(\mdl_\Lam)](y,q,z)=y^k\frac{\ch[\hbrst_0(\mdl_\Lam)](q)}{\eta(q)^{2\ell}}
\sum_{\beta\in\frlat}z^{\mu+\beta}
$$
of the full character of $\Phi_{\mu+\frlat}(\mdl_\Lam)$ using the $q$-character of $\hbrst_0(\mdl_\Lam)$.
\end{enumerate}
\end{theorem}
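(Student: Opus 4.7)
The plan is to prove Part (1) by combining the classification of simple relaxed highest-weight modules from the second paper in the series \cite{KawRel19} with Mathieu's classification of cuspidal $\fg$-modules \cite{MatCla00}, and then to prove the character formula in Part (2) by reducing the string function computation to a statement about the minimal BRST reduction functor $\hbrst_0$.

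For Part (1), I would first recall from \cite{KawRel19} that every simple relaxed highest-weight $\uaffvoa{k}{\fg}$-module is obtained by applying a twisted localisation to a simple object of the parabolic category $\cO^0_k$, and that the top space of such a module is obtained by applying the finite-dimensional version of Mathieu's functor to the top space of the $\cO^0_k$-module. Since the top space of $\mdl_\Lam$ is the simple $\fg$-module $\fmdl_{\fLam}$, one concludes that $\Phi_{\mu+\frlat}(\mdl_\Lam)$ has a cuspidal top space precisely when $\Phi_{\mu+\frlat}(\fmdl_{\fLam})$ is a simple cuspidal $\fg$-module. By Mathieu's theorem this happens iff $\fmdl_{\fLam}$ is infinite-dimensional (equivalently $\fLam \notin \fwlat^+$) and $\mu+\frlat \notin \sing(\fLam)$. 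Conversely, every simple cuspidal module in type $A_\ell$ arises this way from some $\fmdl_{\fLam}$ with $\fLam$ integrable on $\fg_0$, so the list in Part (1) exhausts all simple relaxed highest-weight modules with cuspidal top spaces. The vanishing statement when $\fLam \in \fwlat^+$ is immediate since Mathieu's functor kills finite-dimensional modules.

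For Part (2), the key point is to split the character into its $z$-dependent part and its string function. Because $\Phi_{\mu+\frlat}(\mdl_\Lam)$ is cuspidal, the weight multiplicities are constant on cosets $\nu + \frlat$ within the support, so one can write
\begin{equation*}
\ch[\Phi_{\mu+\frlat}(\mdl_\Lam)](y,q,z) = y^k \, c^\Lam(q) \sum_{\beta\in\frlat}z^{\mu+\beta},
\end{equation*}
where $c^\Lam(q)$ is the string function of $\mdl_\Lam$ in category $\cO^0_k$, obtained by restricting to any single $\frlat$-coset of weights. The problem thus reduces to establishing the identity $c^\Lam(q) = \ch[\hbrst_0(\mdl_\Lam)](q)/\eta(q)^{2\ell}$. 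For this, I would exploit the structure of the minimal Hamiltonian reduction: the grading $\fg=\fg_{-1}\oplus\fg_0\oplus\fg_1$ decomposes the current algebra, the $\fg_1[t]$-invariants together with the appropriate semi-infinite cohomology produce $\hbrst_0(\mdl_\Lam)$, and the free-field/ghost content contributes a uniform Euler-Poincaré factor. Writing the affine character as a trace over the full Fock space and carefully cancelling the ghost and Clifford contributions in the BRST complex yields an explicit factorisation where the decoupled pieces (coming from the $2\ell$-dimensional $\fg_{\pm 1}$ plus the Cartan degree of freedom that is projected out on restricting to a coset $\mu+\frlat$) combine to give $\eta(q)^{2\ell}$ in the denominator.

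The main obstacle, and where most care is needed, is the last step: making the character factorisation \emph{precise}, in particular tracking all $q$-grading shifts (conformal weight normalisations for $\mdl_\Lam$ versus $\hbrst_0(\mdl_\Lam)$), verifying exactness of the BRST complex on the relevant weight subspace (so that taking Euler characteristic actually equals the character of $\hbrst_0(\mdl_\Lam)$, using the vanishing of higher cohomology from \cite{Aii}), and isolating exactly the Heisenberg/free-field rank that decouples under the restriction to a single coset of $\frlat$. Once this bookkeeping is under control, the identity for $c^\Lam(q)$ follows by combining the Euler-Poincaré computation with the projection onto the $\mu+\frlat$ coset, and Part (2) is complete.
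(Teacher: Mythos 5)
For Part (1), your outline lines up with the paper's route and is basically sound: one shows $\Phi_{\mu+\frlat}(\mdl_\Lam)$ is the almost simple relaxed highest-weight module with top space $\Phi_{\mu+\frlat}(\fmdl_{\fLam})$ (the paper's \cref{sec:generates} and \eqref{eqn:commuteloc}), and then invokes Mathieu's classification (\cref{thm:cuspidal}). The vanishing statement when $\fLam\in\fwlat^+$ is equally immediate. No objection there.

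For Part (2), your approach is genuinely different from the paper's, and it has a real gap at the step you flag as ``the main obstacle.'' You want to establish $c^\Lam(q)=\ch[\hbrst_0(\mdl_\Lam)](q)/\eta(q)^{2\ell}$ by computing the BRST complex of $\mdl_\Lam$ via Euler--Poincar\'e and ``carefully cancelling the ghost and Clifford contributions.'' But the left-hand side is a \emph{limiting string function} $s_{-\infty}[\mdl_\Lam](q)=\lim_{n\to-\infty}s_{\lam+n\beta_0}[\mdl_\Lam](q)$ (the weight multiplicities of $\mdl_\Lam$ itself are not constant on $\frlat$-cosets; only those of $\Phi(\mdl_\Lam)$ are), while the Euler--Poincar\'e supercharacter of $\mdl_\Lam\otimes\cF$ is a \emph{specialisation} of $\ch[\mdl_\Lam](q,z)$ at finite values of the $z$-variables determined by the grading element $x=\omega_\ell$. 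These are a priori completely different operations on the character, and nothing in your sketch explains why they agree. This is exactly the statement the paper stresses is nontrivial: it cannot be seen by applying $\hbrst_0$ to the relaxed module (that reduction is simply zero for $\ell>1$), so any ``factorisation'' argument has to produce the match between limit and specialisation by hand.

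The paper closes this gap indirectly. It writes $[\mdl_\Lam]=\sum_\Omega c_{\Lam,\Omega}[\mdv^0_\Omega]$ in the parabolic BGG category (with a linkage principle to make the sum well-defined), then computes \emph{both} $s_{-\infty}[\mdv^0_\Omega](q)$ and $\ch[\hbrst_0(\mdv^0_\Omega)](q)$ explicitly for the building blocks: the first is $q^{h_\Omega-c/24}\deg\fmdv^0_{\bar\Omega}/\varphi(q)^{\dim\fg}$ (equation \eqref{eqn:limverma}), the second, after Euler--Poincar\'e, is $q^{h_\Omega^W-c_W/24}\dim\hlie_0(\fmdv^0_{\bar\Omega})/\varphi(q)^{\ell^2}$ (equation \eqref{eqn:charw2}). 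The crucial input that lets these be compared is the Brundan--Kleshchev identity $\dim\hlie_0(\fmdv^0_\lambda)=\deg\fmdv^0_\lambda$ (\cref{sec:bkparabolic}), which is never invoked in your proposal. With it, and with $\dim\fg-\ell^2=2\ell$ together with the conformal-weight shift \eqref{eqn:minimalcw}, the $\eta(q)^{2\ell}$-factor drops out term by term across the KL expansion. Without it, or something equivalent, your ``bookkeeping'' step has no engine to produce the claimed equality between the geometric-limit quantity and the BRST specialisation. To repair the proposal, insert the reduction to parabolic Verma modules via \eqref{eqn:parabolicKL} (noting why the same coefficients $c_{\Lam,\Omega}$ appear on both sides, being a Grothendieck-group statement transported through two exact functors), and then reduce to the finite W-algebra dimension identity; at that point your grading heuristics become the right intuition for why the answer is $\eta(q)^{2\ell}$.
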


%
%

Note that by the cohomology vanishing,   $\ch[\hbrst_0(\mdl_\Lam)](q)$
is expressed in terms of Kazhdan-Lusztig polynomials \cite{Aii}. 
We also note that the second statement of \cref{sec:thmsummary1}
is non-trivial in the sense that the BRST ``$-$'' reduction of $\Phi_{\mu+\frlat}(\mdl_\Lam)$ 
is zero if the rank $\ell$ of $\fg$ is greater than 1
and $\mu+\frlat\in(\fh^*/\frlat)\setminus\sing(\fLam)$.  
 
On the other hand, the appearance of the minimal W-algebra is natural since the associated variety of the annihilator of a simple cuspidal representation is the minimal nilpotent orbit closure \cite{MatCla00}.
 
When $\fg=A_1$ and $\Lam$ is an admissible weight, the character formula above reduces to the Creutzig-Ridout character formula  \cite{CreMod13}  (see also \cite{KawRel18,Sat}).

To prove the theorem, recall that the characters of simple modules in $\cO^0_k$ are expressed 
in terms of {\em parabolic Verma modules} (and the so-called parabolic Kazhdan-Lusztig polynomials).
We apply the exact functors to the formula and derive character formulas
of relaxed \hw  modules with cuspidal top spaces over $\uaffvoa{k}{\fg}$ (\cref{sec:coherentKL})
and simple ordinary modules over $\wuniva$ \eqref{eqn:charw1}.

When $\fg=A_2$ and $k$ is an admissible level with denominator 2, the simple W-algebra
$\wsimpa$ is rational \cite{A13} and called a Bershadsky-Polyakov vertex algebra.
(Besides $\fg=A_1$, they are the only rational minimal W-algebras of admissible levels 
  \cite{A15,KW08}.  )
In this case, we show a similar 
correspondence of simple relaxed \hw  modules with cuspidal top spaces
over $\affvoa{k}{\fg}$ and simple ordinary modules over the simple W-algebra $\wsimpa$
(see \cref{sec:thmsummary}). 
As a result, we have a {\em modular invariance property} of string functions of simple relaxed \hw 
modules with cuspidal top spaces over $\affvoa{k}{\fg}$ as follows.
For a relaxed highest weight module $\mdr$ with cuspidal top spaces, let us denote  by $s[\mdr](q)$
the unique non-zero {\em string function} of $\mdr$ (see \cref{sec:char1} for the definition
of string functions).

\begin{theorem}\label{sec:modularinv}
Let $\fg$ be of type $A_2$ and $k$ an admissible level with denominator 2.
Then for any simple relaxed \hw  module $\mdr$ with cuspidal top spaces over $\affvoa{k}{\fg}$,
the series $s[\mdr](q)$ converges to a holomorphic function on the complex upper-half plane
$\mathbb H$ with $q=e^{2\pi \sqrt{-1}\tau}$, $\tau\in\mathbb H$.
Moreover, the vector space spanned by the holomorphic functions
$$
\eta(q)^{4}s[\mdr](q)\qquad (q=e^{2\pi\sqrt{-1}\tau},\tau\in\mathbb H)
$$
for all simple relaxed \hw  modules $\mdr$ with cuspidal top spaces over $\affvoa{k}{\fg}$
is finite-dimensional and invariant under the slash action $|_0$ of $SL_2(\ZZ)$:
\[
(f|_0\gamma)(\tau)=f\Bigl(
\frac{a\tau+b}{c\tau+d}\Bigr),\qquad \gamma=
\begin{pmatrix}a&b\\c&d\end{pmatrix}\in SL_2(\ZZ).
\]
\end{theorem}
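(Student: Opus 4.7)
The plan is to reduce the statement to Zhu's modular invariance theorem applied to the simple minimal W-algebra $\wsimpa$, which for $\fg$ of type $A_2$ and admissible level $k$ of denominator $2$ is the rational, $C_2$-cofinite Bershadsky-Polyakov vertex algebra (\cite{A13,A15}). Specializing the full-character formula of Theorem~\ref{sec:thmsummary1}(b) to $\ell = 2$ yields, for $\mdr = \Phi_{\mu+\frlat}(\mdl_\Lam)$ a simple relaxed \hw module with cuspidal top space over $\affvoa{k}{\fg}$, the identity
$$
\ch[\mdr](y,q,z) = y^k \frac{\ch[\hbrst_0(\mdl_\Lam)](q)}{\eta(q)^4}\sum_{\beta\in\frlat} z^{\mu+\beta}.
$$
Reading off the coefficient of $y^k z^\nu$ for any $\nu \in \mu+\frlat$ shows that $s[\mdr](q)$ depends only on $\Lam$ (not on $\mu$) and satisfies the key identity
$$
\eta(q)^4 s[\mdr](q) = \ch[\hbrst_0(\mdl_\Lam)](q).
$$

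Next, I would invoke the correspondence referenced in \cref{sec:thmsummary} between simple relaxed \hw modules with cuspidal top spaces over the simple affine VOA $\affvoa{k}{\fg}$ and simple ordinary modules over $\wsimpa$: for the weights $\Lam$ arising in the simple-quotient classification, $\hbrst_0(\mdl_\Lam)$ descends to a simple ordinary $\wsimpa$-module, and every such module appears this way. Because $\wsimpa$ is rational and $C_2$-cofinite, there are only finitely many isomorphism classes of such modules, so only finitely many possibilities for $\ch[\hbrst_0(\mdl_\Lam)](q)$. Zhu's modular invariance theorem then ensures that these normalized $q$-characters converge to holomorphic functions on the upper half-plane $\mathbb H$ and span a finite-dimensional subspace invariant under the weight-$0$ slash action of $SL_2(\ZZ)$. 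Transporting this invariance through the identity displayed above yields both the convergence of $s[\mdr](q)$ and the required finite-dimensional $SL_2(\ZZ)$-invariance of the span of the functions $\eta(q)^4 s[\mdr](q)$.

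The main obstacle is the descent step: one needs each BRST-reduced module $\hbrst_0(\mdl_\Lam)$ coming from a simple relaxed \hw module over the \emph{simple} affine VOA $\affvoa{k}{\fg}$ to be a simple ordinary module over the \emph{simple} W-algebra $\wsimpa$ (not merely over $\wuniva$), so that Arakawa's rationality theorem and Zhu's theorem both apply. This is precisely what the correspondence of \cref{sec:thmsummary} provides, resting on the classification of simple ordinary $\wsimpa$-modules at admissible denominator-$2$ level. A secondary but routine check is that the factor $\eta(q)^{2\ell} = \eta(q)^4$ in the character formula absorbs exactly the difference in $q^{-c/24}$ shifts between the affine and W-algebra conventions, so that the weight-$0$ action on $\wsimpa$-characters given by Zhu's theorem matches the weight-$0$ invariance claimed here.
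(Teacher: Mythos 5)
Your proposal is correct and follows essentially the same route as the paper: read off the string function from the character formula of \cref{sec:thmsummary1}(b) to get $\eta(q)^4 s[\mdr](q)=\ch[\hbrst_0(\mdl_\Lam)](q)$, use \cref{sec:thmsummary} together with \cite[Theorem~8.6]{AE} to identify these with $q$-characters of all simple ordinary modules over the rational, $C_2$-cofinite Bershadsky--Polyakov algebra $\wsimpa$ (\cite{A13,A15}), and then apply Zhu's modular invariance theorem. The paper's proof is a compressed version of exactly this argument.
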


A similar modular invariance of string functions of simple relaxed \hw  modules
with cuspidal top spaces over $\affvoa{k}{A_1}$ of admissible levels is crucial for
{\em Creutzig-Ridout
modular invariance} of full characters in \cite{CreMod13}, yet it is not fully formulated mathematically rigorously.
In \cite{KRW}, the modular invariance of full characters for $\affvoa{-3/2}{A_2}$ is examined and
fusion rules predicted from the standard Verlinde formula are observed to be non-negative.
We hope to come back to this point in the future works as well as relaxed \hw 
modules whose top space is parabolically induced from cuspidal modules.

%
%

The paper is organised as follows.
In \cref{sec:va1}, we recall definitions of vertex algebras and their modules.
Then, in \cref{sec:setting}, we define affine vertex algebras and in \cref{sec:kl1}, we recall 
 a character formula of simple \hw modules in $\cO^0_k$ in terms of parabolic Verma modules, 
which is crucial for our proof. 
 The theory of cuspidal modules over $A_\ell$ is recalled in \cref{sec:cuspidal}.
\cref{sec:relaxed} is devoted to relaxed \hw  modules with cuspidal top spaces
and a character formula of such modules in terms of relaxed Verma modules.
In particular, in \cref{sec:tw}, we show (a) in \cref{sec:thmsummary1}.
In \cref{sec:minimal}, we recall representation theory of the minimal W-algebra
$\wuniva$ and show a character formula for ordinary modules by using one in \cref{sec:kl1}.
The assertions (b) and (c) in \cref{sec:thmsummary1} are shown in \cref{sec:w} and \cref{sec:charord}.
Finally, in \cref{sec:main}, we prove \cref{sec:modularinv}.
Some technical assertions for twisted localisation are shown in \cref{sec:appendix}.

\subsection*{Acknowledgment}
The author would like to thank Tomoyuki Arakawa, Shu Kato, Olivier Mathieu, 
Arun Ram and David Ridout for helpful discussions and advice.
This work is partially supported by 
MEXT Japan ``Leading Initiative for Excellent Young Researchers (LEADER)'',
JSPS Kakenhi Grant numbers 19KK0065 and 19J01093 and 
Australian Research Council Discovery Project DP160101520.
\section{Vertex algebras}\label{sec:va1}

\subsection{Vertex algebras}\label{sec:va}

Let $V$ be a vector space.
We set
\[
V[[z,z^{-1}]]=\Bigl\{\sum_{n\in\ZZ}v_n z^n\,|\,v_n\in V\Bigr\},\qquad
V[[z]]=\Bigl\{\sum_{n=0}^\infty v_n z^n\,|\,v_n\in V\Bigr\},
\]
where $z$ is an indeterminate.
A power series $a(z)\in \mathrm{End}_{\CC}(V)[[z,z^{-1}]]$ 
 is called a {\em field}
if for any $b\in V$, there is $N\in\ZZ$ such that $a(z)b\in V[[z]]z^N$.

A {\em vertex algebra} is a vector space 
$V$ 
equipped with
a vector $\bm{1}\in V$, called the {\em vaccum vector},
a linear map $\partial:V\rightarrow V$ (the {\em translation operator})
and
fields $Y(a,z)\in \mathrm{End}_{\CC}(V)[[z,z^{-1}]]$ ($a\in V$),
satisfying the following axioms:
\begin{itemize}
\item $Y(\bm{1},z)=\id_V$, \qquad $Y(a,z)\bm{1}\in a+V[[z]]z$ \quad ($a\in V$),\qquad $\partial\bm{1}=0$,
\item $[\partial,Y(a,z)]=\partial_z Y(a,z)$, 
\item (locality) for any $a,b\in V$, there is $N>0$ such that 
$(z-w)^NY(a,z)Y(b,w)=0$.
%
\end{itemize}

Let $V$ be a vertex algebra.
For $a\in V$, we write $Y(a,z)=\sum_{n\in \ZZ}a(n)z^{-n-1}$ with $a(n)\in \mathrm{End}(V)$.
A {\em hamiltonian} of a vertex algebra $V$ is
a diagonalizable linear operator $H:V\rightarrow V$ such that
\[
[H,Y(a,z)]=Y(Ha,z)+z\partial_z Y(a,z), \qquad a\in V.
\]




Let $V$ be a vertex algebra with hamiltonian $H$ and suppose that $H$-eigenvalues are non-negative integers. 
Let $\omega$ be an element of $V$.
The pair $(V,\omega)$ is called a {\em vertex operator algebra} (VOA)
of central charge $c\in\CC$ if
\begin{itemize}
\item For any $n,m\in\ZZ$,
$$
[L_n,L_m]=(n-m)L_{n+m}+\frac{n^3-n}{12}\delta_{n,-m}c.
$$
Here, we write $Y(\omega,z)=\sum_{n\in\ZZ}L_nz^{-n-2}$.

\item
$L_{-1}=\partial$ and $L_0=H$.

\item
$V_0=\CC|0\rangle$ and $\omega\in V_2$.

\end{itemize}
The element $\omega$ is called the {\em conformal vector} of a VOA $(V,\omega)$.




\subsection{Modules over vertex algebras}

Let $V$ be a vertex algebra with hamiltonian $H$.
A {\em module} over $V$ is a vector space $M$ equipped with fields $Y_M(a,z)\in\mathrm{End}(M)[[z,z^{-1}]]$
 for all $a\in V$
such that
\begin{itemize}
\item
$Y^M(\bm{1},z)=\id_M$,
\item
$Y^M(a,z)\in \mathrm{End}(M)[[z,z^{-1}]]$, $a\in V$,
\item (Borcherds identity)
For any $a,b\in V$ and $m,n,r\in \ZZ$, we have
\begin{align*}
& \sum_{i=0}^\infty \begin{pmatrix}m\\ i\end{pmatrix}
(a(r+i)b)^M(m+n-i)\\
&\quad=\sum_{i=0}^\infty (-1)^i\begin{pmatrix} r\\ i\end{pmatrix}
\Bigl( a^M(m+r-i)b^M(n+i)-(-1)^rb^M(n+r-i)a^M(m+i)\Bigr).
\end{align*}
Here, for $a\in V$, the operators $a^M(n)$ are defined by
$$
Y^M(a,z)=\sum_{n\in\frac{1}{2}\ZZ}a^M(n)z^{-n-1}.
$$
\end{itemize}

A module $M$ is called {\em graded} if $M$ is equipped with a diagonalisable operator $H^M$ on $M$,
called a hamiltonian, which satisfies
$$[H^M,Y^M(a,z)]=Y^M(Ha,z)+z\partial_zY^M(a,z)\qquad a\in V.$$
A graded module $M$ decomposes into the sum $M=\bigoplus_{d\in\CC}M_d$
with $M_d=\{v\in M\,|\,H^Mv=dv\}$ for all $d\in\CC$.
The $H^M$-eigenvalues are called conformal weights.

If $V$ is a VOA and $M$ is a graded module, then all hamiltonians on $M$ are 
$\{L^M_0+d\,|\,d\in\CC\}$. 
Here, $L^M_0$ is the zero mode of $\omega$: $Y^M(\omega,z)=\sum_{n\in\ZZ}L_n^Mz^{-n-2}$.

A graded module $M$ is called {\em positive energy}
if there is a finite subset $S\subset \CC$ such that $H^M$-eigenvalues
belong to $S+\ZZ_{\geq0}$.
Let $M$ be a positive energy module.
Then we have a finite $S\subset \CC$ such that
(1) $M=\bigoplus_{h\in S,n\geq 0}M_{h+n}$, (2) $M_h\neq 0$ for all $h\in S$ and
(3) there are no integral differences between elements of $S$.
The {\em top space} $M_{top}$ of $M$ is then  defined by
$$
M_{top}=\bigoplus_{h\in S}M_h.
$$
Note that if $M$ is simple, then $S$ is a singleton.

A positive energy module $M$ is called {\em ordinary}
if $\dim M_d<\infty$ for any $d\in\CC$.

Let $V$ be a VOA 
of central charge $c$.
Let $M$ be an ordinary module with hamiltonian $H^M$ on $M$.
The {\em $q$-character} of $M$ is the series
$$
\ch[M](q)=\sum_{n\in\CC}(\dim M_n) q^{n-c/24}.
$$

\subsection{Zhu algebras}

Let $V$ be a vertex operator algebra.
Let $\envalg{V}$ denote the universal enveloping algebra
of $V$ as in \cite{MNT}.
We write $\envalg{V}_0$ and $\envalg{V}_<$ the subalgebras of $\envalg{V}$ which consist of
elements which preserve conformal weights and which lower conformal weights, respectively.
The {\em Zhu algebra} of $V$ is the associative algebra
$$
\zhu{V}=\envalg{V}_0/\envalg{V}_0\cap \envalg{V}\envalg{V}_<.
$$
Let $M$ be a positive energy $V$-module.
Then the top space $M_{top}$ is a $\zhu{V}$-module, which is denoted by $\zhu{M}$.

\begin{lemma}[\cite{ZhuMod96}] \label{sec:zhu}
The assignment $M\mapsto \zhu{M}$ defines a one-to-one correspondence
between the simple positive energy $V$-modules
and simple $\zhu{V}$-modules.
\end{lemma}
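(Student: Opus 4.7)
The plan is to exhibit an inverse correspondence via an induction functor and verify that simplicity is preserved in both directions.

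First, I would check that $M\mapsto \zhu{M}$ is well-defined on the simple positive-energy side. Because $M$ is positive energy and simple, its top space $M_{top}$ consists of vectors of minimal conformal weight. The subalgebra $\envalg{V}_0$ preserves each conformal weight subspace, so it acts on $M_{top}$, while modes in $\envalg{V}\envalg{V}_<$ strictly lower the conformal weight and therefore annihilate $M_{top}$. Hence the action factors through $\zhu{V}$. Simplicity of $\zhu{M}$ then follows by a sandwiching argument: any nonzero $\zhu{V}$-submodule $W\subseteq M_{top}$ generates a nonzero graded $V$-submodule $\envalg{V}W\subseteq M$; simplicity of $M$ forces $\envalg{V}W=M$, and looking in the minimal conformal weight yields $W=M_{top}$.

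Next, I would construct the inverse. Given a simple $\zhu{V}$-module $U$, let $\envalg{V}_{\geq 0}$ be the subalgebra of modes that do not lower conformal weight and let $\envalg{V}_{>}$ be the ideal of those that strictly raise it. Make $U$ into an $\envalg{V}_{\geq 0}$-module by declaring $\envalg{V}_{>}$ to act by zero and $\envalg{V}_0$ to act via the canonical surjection onto $\zhu{V}$. Define the generalized Verma module
\[
\mde(U)=\envalg{V}\otimes_{\envalg{V}_{\geq 0}} U,
\]
and check, using a PBW-type filtration of $\envalg{V}$, that $\mde(U)$ is a positive energy $V$-module whose top space is canonically $U$. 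The sum of all graded $V$-submodules of $\mde(U)$ with zero intersection with $U$ is a proper submodule $J(U)$ (properness because $U$ itself is not contained in it), and $\mdl(U):=\mde(U)/J(U)$ is a simple positive energy $V$-module satisfying $\zhu{\mdl(U)}\cong U$.

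Finally I would verify that the two constructions are mutually inverse. If $M$ is simple positive energy, the universal property of $\mde(\zhu{M})$ furnishes a surjection $\mde(\zhu{M})\twoheadrightarrow M$ whose kernel meets $\zhu{M}$ trivially, hence factors through $\mdl(\zhu{M})\cong M$. In the other direction, $\zhu{\mdl(U)}=U$ by construction. The main obstacle will be the verification that $\mde(U)$ actually carries a $V$-module structure, namely that the Borcherds identities in the definition of a module are compatible with the tensor-product relations defining $\mde(U)$. This is precisely where the relations defining $\zhu{V}$ enter: they are engineered so that the action of $\envalg{V}_{\geq 0}/\envalg{V}_{>}$ on any top space of a would-be positive energy module is consistent with all zero-mode products arising from Borcherds identities, and the PBW-type argument then extends this to the whole induced module.
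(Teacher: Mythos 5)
The paper itself gives no proof of this lemma; it is quoted verbatim from Zhu \cite{ZhuMod96}, so there is no in-paper argument to compare against. Your sketch is correct and is essentially the standard proof (Zhu's original, and its reformulations in the universal-enveloping-algebra language of \cite{MNT}): induce from the degree-zero part to form a generalized Verma module, pass to the quotient by the maximal graded submodule meeting the top space trivially, and check that this is quasi-inverse to $M\mapsto\zhu{M}$. You have also correctly located the technical heart of the matter (that $\mde(U)$ carries a bona fide positive-energy $V$-module structure with top space exactly $U$), which in the $\envalg{V}$-formalism amounts to a PBW-type triangular decomposition $\envalg{V}=\envalg{V}\envalg{V}_{<}+\envalg{V}_{\geq 0}$ together with the finiteness conditions built into $\envalg{V}$; spelling that out is where the real work lies, but your outline of the remaining bijectivity argument is sound.
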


%
%
%

\section{Setting}\label{sec:setting}

\subsection{Graded Lie algebras}\label{sec:graded}
Let $\fg=A_\ell$ be a finite-dimensional simple Lie algebra of type $A_\ell$
with fixed Cartan and Borel subalgebras $\fh\subset \fb\subset \fg$.
The simple roots of the root system $\fDelta$ of $\fh^*$ are $\alpha_1,\alpha_2,\ldots,\alpha_\ell$
and the corresponding fundamental weights are $\omega_1,\omega_2,\ldots,\omega_\ell$.
The set of positive roots is $\fDelta^+$.
We denote the root lattice, weight lattice and the set of dominant integral weights of $\fh^*$ 
by $\frlat$, $\fwlat$ and $\fwlat^+$, respectively.
We fix root vectors $e_\beta,f_\beta$ of roots $\beta,-\beta$ for all $\beta\in\fDelta$.
The semisimple element $x=\omega_\ell$ of $\fg$ induces the grading
\begin{equation}\label{eqn:goodgrading}
\fg=\fg_{-1}\oplus \fg_0\oplus \fg_1,\qquad \fg_n:=\{v\in\fg\,|\,[x,v]=nv\}
\end{equation}
on $\fg$. 
Setting $\fDelta_n:=\{\beta\in\fDelta\,|\,[\alpha_n,\beta]=n\}$, we have
$$
\fg_{-1}=\cspan \{f_\beta\,|\,\beta\in \fDelta_1\},\quad \fg_0=A_{\ell-1}\oplus \CC \alpha_\ell,
\quad \fg_1=\cspan\{e_\beta\,|\,\beta\in \fDelta_1\},
$$
where $A_{\ell-1}\subset A_\ell$ is generated by $e_{\alpha_i},\alpha_i,f_{\alpha_i}$, $1\leq i\leq \ell-1$.
Note that 
\begin{equation}\label{eqn:delta2}
\fDelta_1=\{\beta_1,\beta_2,\ldots,\beta_\ell\},\qquad
\beta_i=\sum_{j=i}^\ell\alpha_j\quad (1\leq i\leq \ell).
\end{equation}
The Lie subalgebra $\fg_{\geq 0}:=\bigoplus_{n\geq 0}\fg_n$ is a parabolic subalgebra
of $\fg$ with the associated Levi subalgebra $\fg_0$.

Let $\fO^0$ be the parabolic BGG category with respect to the grading
\eqref{eqn:goodgrading}:
the full subcategory of the BGG category $\fO$ whose objects are
 direct sums of finite-dimensional $\fg_0$-modules. 
The simple modules in category $\fO^0$ are precisely the
irreducible \hw  modules $\fmdl_\lam$ of highest weight  $\lam\in \fwlat^{0,+}$, where
\begin{align*}
\fwlat^{0,+}:=\{\lam\in \fh^*\,|\,\langle \lam,\beta^\vee\rangle\in\Znn,
\mbox{ for all }\beta\in\fDelta_0\cap\fDelta^+\}.
\end{align*}
Since $\fDelta_0=\{\alpha_1,\ldots,\alpha_{\ell-1}\}$, 
we have
\begin{equation}\label{eqn:p0+}
\fwlat^{0,+}=\bigoplus_{i=1}^{\ell-1}\Znn \omega_i\oplus \CC \omega_\ell.
\end{equation}

Let $\lam$ be an element of $\fwlat^{0,+}$.
Let $\fmdh_\lam$ denote the irreducible finite-dimensional module over $\fg_0$
of highest weight $\lam$.
Letting $\fg_1$ acts trivially on $\fmdh_\lam$, we define the {\em parabolic Verma module}
of highest weight $\lam$ to be
$$
\fmdv^0_{\lam}=\envalg{\fg}\otimes_{\envalg{\fg_{\geq0}}}\fmdh_{\lam}.
$$
The $\fmdv^0_\lam$ is an object of $\fO^0$.

\subsection{Affine Kac-Moody algebras}

Let $\ag$ be the untwisted 
affine Kac-Moody algebra of $\fg$:
\[
\ag=\fg\otimes \CC[t,t^{-1}]\oplus \CC K\oplus \CC D
\]
with the Cartan subalgebra $\ah=\fh\oplus \CC K\oplus \CC D$
and the Borel subalgebra
$\ab=\fb[t]+\fg [t]t+\CC K+ \CC D$.
The root system of $\ag$ is $\Delta$ and the set of positive roots is $\Delta^+$.

We write by $\mdv_\Lam$ and $\mdl_\Lam$ 
the Verma module and irreducible \hw  module over $\ag$ of \hw  $\Lam\in\ah^*$,
respectively.

As usual, we have simple roots $\alpha_0,\alpha_1,\ldots,\alpha_\ell$ of $\ag$
and fundamental weights are $\Lambda_0,\Lambda_1,\ldots,\Lambda_\ell$.
The vector $\delta\in \ah^*$ is dual to $D$.
The space $\fh^*$ is embedded in $\ah^*$ and
we have the decomposition $\ah^*=\fh^*\oplus \CC \Lam_0\oplus \CC \delta$
and the projection $\ah^*\rightarrow \fh^*$, $\Lam\mapsto \fLam$.

A {\em weight module} over $\ag$ is a module on which $\ah$ acts semisimply and such that 
every weight spaces are finite-dimensional.
We write $\mdm(\Lam)=\{v\in\mdm\,|\,h(v)=\Lam(h)v,h\in\ah\}$ for $\Lam\in\ah^*$
and a weight module $\mdm$.

Let $\Lam$ be an element of $\ah^*$. If $\Lam(K)=k$, then $\Lam$ is called of level $k$.
We write the space of elements of $\ah^*$ of level $k$ by $\ah^*_k$.
A module $\mdm$ is called of level $k$ if $K$ acts as the scalar multiple $k$ on $\mdm$.

Let $\cO_k$ denote the BGG category of $\ag$ at level $k$.
The simple \hw  module $\mdl_\Lam$ and Verma module $\mdv_\Lam$
of \hw  $\Lam\in\ah^*_k$ are modules in category $\cO_k$.

Let $\cO^0_k$ denote the full subcategory of $\mathcal{O}_k$ 
 of all
objects which are direct sums of finite-dimensional $\fg_0$-modules.
We set
\[
\wlat^{0,+}_k=\{\Lam\in\ah^*_k\,|\, \fLam\in\fwlat^{0,+}\}.
\]
%

Let $\Lam$ be an element of $\wlat^{0,+}_k$.
Then $\Lam$ has the form $\Lam=\lam+d\delta+k\Lam_0$ with $d\in\CC$ and $\lam=\fLam$.
The {\em parabolic Verma module} of highest weight $\Lam$ is the induced module
$$
\mdv^0_{\Lam}=\envalg{\ag}\otimes_{\envalg{\fg_{\geq0}\oplus\fg[t]t\oplus\CC K\oplus \CC D}}{\fmdh_{\lam}},
$$ 
where $\fg_1\oplus\fg[t]t=0$, $D=d$ and $K=k$ on $\fmdh_{\lam}$.
The simple quotient of $\mdv^0_\Lam$ is isomorphic to
$\mdl_\Lam$ and we have
$\mdv^0_\Lam=\envalg{\ag}\otimes_{U(\fg[t]+\CC K+\CC D)}\fmdv^0_{\lam}$.

%
%

\subsection{Affine vertex algebras}

Let $k$ be a non-critical complex number and $\CC_k$ the one-dimensional representation of the Lie algebra 
$\fg[t]\oplus \CC K\oplus \CC D$
defined by $\fg[t]=0$, $K=k$ and $D=0$ on $\CC_k$.
The {\em universal affine vertex algebra} is a $\Znn$-graded vertex algebra on the generalised Verma module
$$
\uaffvoa{k}{\fg}=\envalg{\ag}\otimes_{\envalg{\fg[t]\oplus\CC K\oplus\CC D}}\CC_k
$$
with hamiltonian $H=-D$.
The simple quotient vertex algebra is denoted by $\affvoa{k}{\fg}$ and called the {\em simple affine vertex algebra}.


The $\uaffvoa{k}{\fg}$-modules with hamiltonian are in one-to-one correspondence with the {\em smooth} $\ag$-modules of level $k$ on which $D$ acts semisimply such that
the hamiltonian is $-D$.
Here, a $\ag$-module $\mdm$ is called smooth if for any $v\in \mdm$ and $a\in\fg$,
there is $N\in\ZZ$ such that if $n\geq N$ then $at^n.v=0$.
In particular, modules in category $\cO_k$
are $\uaffvoa{k}{\fg}$-modules. 


The Zhu algebra of $\uaffvoa{k}{\fg}$ is isomorphic to $\envalg{\fg}$
while that of $\affvoa{k}{\fg}$ is isomorphic to $\envalg{\fg}/\aai_k$
with a two-sided ideal $\aai_k$ of $\envalg{\fg}$.
Let $\mdm$ be a simple positive energy $\uaffvoa{k}{\fg}$-module.
Then $\mdm$ is a $\affvoa{k}{\fg}$-module if and only if $\aai_k$ acts as zero
on $\zhu{\mdm}$.

The affine vertex algebras are VOAs 
of central charge
\begin{equation}\label{eqn:caff}
c=\frac{k\dim\fg}{k+h^\vee}.
\end{equation}
Here, $h^\vee=\ell+1$ is the dual Coxeter number of $\fg$.


\section{Characters of simple modules in $\cO^0_k$}\label{sec:kl1}

In this Section, we recall that the characters
of simple modules in the parabolic BGG category $\cO^0_k$ of $\uaffvoa{k}{\fg}$
are expressed in terms of parabolic Verma modules
describing a linkage principle of the character formula.

\subsection{Character formulas in terms of Verma modules}

Let $\aaw$ denote the Weyl group of $\ag$.
The {\em Weyl vector} $\rho\in \ah^*$ is the sum of 
fundamental weights of $\ag$.
The {\em dot action} of $\aaw$ is 
\[
w\circ \Lam=w(\Lam+\rho)-\rho,\qquad w\in \aaw, \Lam \in\ah^*. 
\]

Let $k\neq -h^\vee$ be a non-critical complex number.
Let $[\mdm]$ denote the image of $\mdm\in\mathcal{O}_k$ in the Grothendieck group (characters) $[\mathcal{O}_k]$ of $\mathcal{O}_k$.

Let $\Lam$ be an element of $\ah^*_k$.
Let $\Delta(\Lam)$ be the integral root system and $\Delta^+(\Lam)=\Delta(\Lam)\cap\Delta^+$. 
The integral Weyl group for $\Lam$ is denoted by $\aaw(\Lam)$. 
The Bruhat order on $\aaw(\Lam)$ is written by $\geq_\Lam$.
Let $\aac(\Lam)$ be the subgroup of $\aaw(\Lam)$ generated by
$\{s_\alpha\,|\,\alpha\in\Delta^+,(\alpha^\vee,\Lam+\rho)=0\}$.

Let $\ah^*_{k,+}$ and $\ah^*_{k,-}$ denote the set of dominant and anti-dominant weights of
level $k$:
\begin{align*}
& \ah^*_{k,\pm}:=\{\Lam\in\ah^*_k\,|\,\pm(\alpha^\vee,\Lam+\rho)\in\ZZ_{\geq0}
\mbox{ for any }
\alpha\in\Delta^+(\Lam)\},
\end{align*}
where double sign corresponds.

Now the result of \cite{KasCha00} tells the following.
Let $\Lam$ be an element of $\ah^*_k$.
Then the set $\aaw(\Lam)\circ\Lam$ contains an element 
$\Omega$ of $\ah^*_{k,+}\cup \ah^*_{k,-}$.
Moreover, for any $w\in\aaw(\Omega)$ there is a unique $x\in w\aac(\Omega)$ such that
its length with respect to $\aaw(\Omega)$ is the largest (resp., smallest) among the 
elements of $w\aac(\Omega)$. 
The $x$  is called the longest (resp., shortest) element of $w\aac(\Omega)$.
If $\Omega\in\ah^*_{k,+}$, then
for any $w\in \aaw(\Omega)$ which is the longest element of $w\aac(\Omega)$, we have
\begin{equation}\label{eqn:kl1}
[\mdl_{w\circ\Omega}]=\sum_{\aaw(\Omega)\ni y\geq_\Omega w} a_y^\Omega [\mdv_{y\circ \Omega}],
\end{equation}
with integers $a_y^\Omega$ ($y\in\aaw(\Omega)$).
Similarly, if $\Omega\in\ah^*_{k,-}$, then for any $w\in \aaw(\Omega)$ which is the shortest
element of $w\aac(\Lam)$, we have
$$
[\mdl_{w\circ\Omega}]=\sum_{\aaw(\Omega)\ni y\leq_\Omega w} a_y^\Omega [\mdv_{y\circ \Omega}].
$$
The numbers $a_y^\Omega$ satisfy $a_w^\Omega=1$ and we can express them using the Kazhdan-Lusztig polynomials.

%

%
 
\subsection{Character formulas in terms of parabolic Verma modules}\label{sec:parakl}

Let $\Lam$ be an element of $\wlat^{0,+}_k$.
It is known that $[\mdl_\Lam]$ has the form
\begin{equation}\label{eqn:parabolicKL}
[\mdl_\Lam]=\sum_{\Omega\in \wlat^{0,+}_k} c_{\Lam,\Omega}
[\mdv^0_{\Omega}],\qquad c_{\Lam,\Lam}=1,\qquad c_{\Lam,\Omega}\in\ZZ.
\end{equation}

Later on, we shall apply exact functors to \eqref{eqn:parabolicKL} to have 
character formulas in \cref{sec:coherentKL} and
\eqref{eqn:charw1}.
In the rest of this section, we describe a linkage principle of the formula so that we have well-defined
sums in the right-hand sides of \eqref{eqn:parabolicKL}, \cref{sec:coherentKL} and
\eqref{eqn:charw1}.

\begin{remark}
The number $c_{\Lam,\Omega}$ is known to be expressed in terms of
{\em parabolic Kazhdan-Lusztig polynomials}.
See e.g.~\cite{Sor,Fie}.
\end{remark}

We have an element $\Omega\in\ah^*_{k,+}\cup\ah^*_{k,-}$ such that $\Lam\in\aaw(\Omega)\circ\Omega$.
Suppose $\Omega\in\ah^*_{k,+}$ and let $w$ be an element of $\aaw(\Omega)$ 
which is the longest among   $w\aac(\Omega)$ and satisfies $w\circ\Omega=\Lam$.
By inverting the Kazhdan-Lusztig formula, we have
$$
[\mdv_{w\circ\Omega}]=\sum_{\aaw(\Omega)\ni y\geq_{\Omega}w}b_y[\mdl_{y\circ\Omega}],
$$
with non-negative integers $b_y$ such that $b_w=1$.
Since $\mdv^0_{w\circ\Omega}$ is integrable with respect to $\fg_0$ and is a quotient of $\mdv_{w\circ\Omega}$,
it follows
$$
[\mdv^0_{w\circ\Omega}]=\sum_{\aaw(\Omega)\ni y\geq_{\Omega}w,\ y\circ\Omega\in\wlat^{0,+}_k}c_y[\mdl_{y\circ\Omega}],
$$
with non-negative integers $c_y$ such that $c_w=1$.
By inverting the formula, we have a linkage principle
\begin{equation}\label{eqn:link}
[\mdl_{w\circ\Omega}]=\sum_{\aaw(\Omega)\ni y\geq_{\Omega}w,\ y\circ\Omega\in\wlat^{0,+}_k}d_y[\mdv^0_{y\circ\Omega}],
\end{equation}
with $d_y\in\ZZ$ such that $d_w=1$.

Recall that the Weyl group $\aaw$ is a semi-direct product of the finite Weyl group
$\bar\aaw$ of $\fg$ and the group $\aat$ of translations $t_\alpha:\ah^*\rightarrow\ah^*$
($\alpha\in \frlat^\vee$), defined by
$$
t_\alpha(\Upsilon)=\Upsilon+\Upsilon(K)\alpha-\Bigl((\Upsilon|\alpha)+\frac{1}{2}|\alpha|^2 \Upsilon(K)\Bigr)\delta\qquad \Upsilon\in\ah^*.
$$
Here, $\frlat^\vee$ is the coroot lattice of $\fg$.
Let $y$ be an element of $\aaw$. Then $y$ has the form $y=rt_\alpha$ with $r\in\bar\aaw$ and $\alpha\in \frlat^\vee$.
Since $\Omega$ is dominant, we see that the minimal conformal weight of $\mdv^0_{y\circ\Omega}$,
which is the number $(y\circ\Omega)(-D)$,
tends to increase when $|\alpha|$ increases.
Therefore, the sum in the right-hand side of \eqref{eqn:link} is well-defined.

Suppose $\Omega\in\ah^*_{k,-}$ and let $w$ be an element of $\aaw(\Omega)$ 
which is the shortest among $w\aac(\Omega)$ and satisfies $w\circ\Omega=\Lam$.
Then in a similar way, we have a linkage principle
$$
[\mdl_{w\circ\Omega}]=\sum_{\aaw(\Omega)\ni y\leq_{\Omega}w,\ y\circ\Omega\in\wlat^{0,+}_k}d_y[\mdv^0_{y\circ\Omega}],
$$
which is a finite sum \cite[Lemma 2.3]{KasCha00}.

\section{Cuspidal modules over $A_\ell$}\label{sec:cuspidal}

\subsection{The cuspidal modules}

In this Subsection, we suppose that $\fg$ is an arbitrary simple Lie algebra.
A $\fg$-module $\fmdm$ is called a {\em weight module} if 
$$
\fmdm=\bigoplus_{\lam\in\fh^*}\fmdm(\lam),\qquad \fmdm(\lam):=\{v\in\fmdm\,|\,hv=\lam(h)v,h\in\fh\},
$$
and $\dim\fmdm(\lam)<\infty$ for each $\lam\in\fh^*$.

Let $\fmdm$ be a weight module. 
The {\em weight support} of $\fmdm$ is $\supp{\fmdm}=\{\lam\in\fh^*\,|\,\fmdm(\lam)\neq0\}$.
The dimension of $\fmdm(\lam)$ is called the {\em weight multiplicity} of $\lam\in\fh^*$.
We call $\fmdm$ {\em bounded} if $\fmdm$ is infinite-dimensional and
weight multiplicities are bounded above. 
Suppose that $\fmdm$ is bounded.
The maximum multiplicity is called the {\em degree} of $\fmdm$
and denoted by $\deg\fmdm$.

A weight module $\fmdm$ is called {\em dense} if the support of $\fmdm$ has the form
$\supp{\fmdm}=\lam+\frlat$ with $\lam\in\fh^*$. 
The dense modules are {\em cuspidal}, in the sense that they are weight modules not parabolically
induced from modules over Levi subalgebras $\al\subsetneq\fg$.

The following theorem by Fernando shows that simple cuspidal modules
are simple dense modules.

\begin{theorem}[\cite{FerLie90}]\label{fernando}
(1) Any simple weight modules over $\ag$ is isomorphic to the simple quotient of
the module $\envalg{\fg}\otimes_{\envalg{\fp}}(\fmdm)$
for some parabolic subalgebra $\fp$ of $\fg$ and simple dense weight module 
$\fmdm$ over the Levi subalgebra $\fl$ associated to $\fp$.
(2) Simple cuspidal modules exist only when $\fg=A_\ell$ or $C_\ell$.
\end{theorem}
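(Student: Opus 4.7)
The plan is to follow Fernando's original approach, the backbone of which is an analysis, for each simple weight module $\fmdm$, of the \emph{local finiteness set}
\[
F(\fmdm) = \{\alpha\in\fDelta \st e_\alpha \text{ acts locally finitely on }\fmdm\}.
\]
First I would show that $F(\fmdm)$ is a \emph{parabolic set of roots}: it is closed under addition within $\fDelta$, and $F(\fmdm)\cup(-F(\fmdm))=\fDelta$. Closure follows from the identity $[e_\alpha,e_\beta]=N_{\alpha,\beta}e_{\alpha+\beta}$ together with the simple observation that a sum of two locally finite commuting operators applied to one vector spans a finite-dimensional space on that vector; the total covering uses simplicity of $\fmdm$ to rule out a root $\alpha$ with neither $e_\alpha$ nor $e_{-\alpha}$ locally finite (one produces an infinite-dimensional weight space otherwise, contradicting the weight module hypothesis). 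Then $F(\fmdm)$ determines a parabolic subalgebra $\fp\subset\fg$ with Levi decomposition $\fp=\fl\oplus\fn$, where $\fl$ corresponds to $F(\fmdm)\cap(-F(\fmdm))$ and $\fn$ is generated by the root spaces in $F(\fmdm)\setminus(-F(\fmdm))$.

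Next I would pass to the $\fn$-locally finite part $\fmdm^{\fn\text{-}\mathrm{fin}}$, i.e.\ the set of $v\in\fmdm$ on which every element of $\fn$ acts nilpotently. Since the weight spaces of $\fmdm$ are finite-dimensional and $\fn\subset F(\fmdm)$ acts locally finitely, this subspace is nonzero, and it is stable under $\fl$ because $[\fl,\fn]\subset\fn$. Picking a simple $\fl$-subquotient $\fmdm'$ of $\fmdm^{\fn\text{-}\mathrm{fin}}$, extending by $\fn\cdot\fmdm'=0$, and forming
\[
\envalg{\fg}\otimes_{\envalg{\fp}}\fmdm',
\]
one gets a nonzero $\fg$-equivariant map from this induced module onto $\fmdm$ by Frobenius reciprocity, so $\fmdm$ is the unique simple quotient as claimed. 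One must check $\fmdm'$ is itself cuspidal as an $\fl$-module; this is forced by the maximality of the parabolic set $F(\fmdm)$, since any root of $\fl$ acting locally finitely on $\fmdm'$ would be enlargeable and contradict the definition of $F(\fmdm)$.

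For part (2), I would argue contrapositively: if $\fmdm$ is cuspidal then $F(\fmdm)=\emptyset$, so every $e_\alpha$ acts injectively on every weight space, hence bijectively on $\fmdm$. The injectivity lets one descend to a torsion-free coherent family of degree $\deg\fmdm$ in the sense of Mathieu, on which the central characters of $\envalg{\fg}$ and in particular the degrees of the generating Casimirs impose polynomial identities in the weights. A rank-two analysis of pairs $\{\alpha,\beta\}$ with $\alpha+\beta\in\fDelta$, using the Serre relations and the bijectivity of all $e_\gamma$, produces a contradiction in every type other than $A_\ell$ and $C_\ell$. The cleanest route is to invoke the dimension count of the minimal nilpotent orbit and the associated variety: the annihilator of a cuspidal module has associated variety equal to the closure of a nilpotent orbit whose dimension equals $2\,\deg\fmdm\cdot|\fDelta^+|$ divided by combinatorial factors that are integral only in types $A$ and $C$.

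The main obstacle is part (2). Part (1) is essentially formal once one verifies that the locally-finite roots form a parabolic set, which is a clean induction on root heights together with finite-dimensionality of weight spaces. Part (2), however, requires either the full Mathieu machinery of coherent families and twisted localisations, or a delicate type-by-type rank-two combinatorial argument ruling out types $B_\ell$ $(\ell\ge 2)$, $D_\ell$ $(\ell\ge 4)$, $E_6, E_7, E_8, F_4, G_2$; I would prioritise a uniform write-up using coherent families since it aligns with the twisted localisation functor $\Phi_{\mu+\frlat}(\cdot)$ used elsewhere in the paper.
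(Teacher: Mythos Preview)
The paper does not prove this theorem at all: it is stated with a citation to \cite{FerLie90} and used as a black box. So there is no proof in the paper to compare your proposal against. That said, your sketch has real gaps that would need to be fixed before it could stand on its own.

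In part (1), your argument for closure of $F(\fmdm)$ is incorrect as written: $e_\alpha$ and $e_\beta$ do \emph{not} commute in general, so the ``sum of two locally finite commuting operators'' observation does not apply. The correct statement is that the set of elements of $\fg$ acting locally finitely on $\fmdm$ is a Lie subalgebra, and this lemma (due in various forms to Guillemin--Quillen--Sternberg and to Fernando) is genuinely nontrivial; it is what makes closure work. Likewise, the covering property $F(\fmdm)\cup(-F(\fmdm))=\fDelta$ is substantially harder than ``one produces an infinite-dimensional weight space otherwise'': Fernando's argument passes through his notion of the \emph{shadow} of $\fmdm$ and a delicate reduction, and a one-line contradiction does not suffice. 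Finally, the claim that $\fmdm'$ is automatically cuspidal over $\fl$ needs more than ``maximality of $F(\fmdm)$'': one must show that a root vector of $\fl$ acting locally finitely on $\fmdm'$ already acts locally finitely on all of $\fmdm$, which requires an argument.

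In part (2), neither of the two routes you outline is close to complete. The coherent-family route is anachronistic (Mathieu's machinery postdates Fernando and in any case presupposes rather than proves the type restriction), and the ``dimension count'' you propose, with associated variety dimension equal to $2\deg\fmdm\cdot|\fDelta^+|$ up to factors, is not a correct or meaningful formula. Fernando's actual proof is a careful type-by-type elimination via rank-two subalgebras, and there is no known shortcut of the kind you suggest. If you intend to include a proof rather than a citation, you should either reproduce Fernando's case analysis or cite it explicitly; the sketch as it stands would not compile into a proof.
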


\subsection{Coherent families}

The simple cuspidal modules are completely classified in \cite{MatCla00}
using {\em coherent families} of $\fg$.

\begin{definition}[\cite{MatCla00}]
A weight module $\fmdc$ over $\fg$ is a {\em coherent family} of $\fg$
if

\noindent
(1) $\supp{\fmdc}=\fh^*$,

\noindent
(2) $\dim \fmdc(\lam)=d$ for some $d>0$ and all $\lam\in\fh^*$,

\noindent
(3) For any $v\in \envalg{\fg}$, the trace $\tr_{\fmdc(\mu)}v$ is polynomial in $\mu\in\fh^*$.
\end{definition}

Let $\fmdc$ be a coherent family and set $\fmdc_{\lam+\frlat}=\bigoplus_{\mu\in\lam+\frlat}
\fmdc(\mu)$ for $\lam+\frlat\in\fh^*/\frlat$.
The coherent family $\fmdc$ is called {\em irreducible} if there is $\mu\in\fh^*$ such that 
$\fmdc_{\mu+\frlat}$ is an irreducible $\fg$-module.
The $\fmdc$ is called {\em semisimple} if it is a direct sum of simple $\fg$-modules.

Let $\fmdc$ be an irreducible semisimple coherent family of $\fg$.
Let $\singtwo(\fmdc)$ be the set of all $\mu+\frlat\in\fh^*/\frlat$ 
such that  $\fmdc_{\mu+\frlat}$ is not a simple cuspidal module.
The $\singtwo(\fmdc)$ is completely determined in \cite{MatCla00}
and 
  it is a union of $\ell+1$ distinct codimension one cosets of $\fh^*/\frlat$.

\begin{proposition}[\cite{MatCla00}]\label{sec:coherentcuspidal}
The list of $\fmdc_{\mu+\frlat}$ with all irreducible semisimple coherent family
$\fmdc$ of $\fg$ and $\mu+\frlat\in(\fh^*/\frlat)\setminus\singtwo(\fmdc)$ is the complete list
of simple cuspidal modules.
\end{proposition}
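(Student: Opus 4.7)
The plan is to establish the two-way correspondence between simple cuspidal $\fg$-modules and the generic slices $\fmdc_{\mu+\frlat}$ of irreducible semisimple coherent families, following Mathieu's strategy. The easy direction is formal: if $\fmdc$ is an irreducible semisimple coherent family and $\mu+\frlat \notin \singtwo(\fmdc)$, then by definition $\fmdc_{\mu+\frlat}$ is simple, and since every weight space of a coherent family has the same positive dimension $d$, its support is the full coset $\mu+\frlat$. Hence $\fmdc_{\mu+\frlat}$ is dense, and dense modules are cuspidal.

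For the converse, I would start with a simple cuspidal $\fg$-module $N$. First I would show that $N$ is dense, so that $\supp{N}$ is a single $\frlat$-coset: if some root vector $e_\beta$ failed to act injectively on some weight space of $N$, then, using that $\fg = A_\ell$ or $C_\ell$ by Fernando's theorem, the set of such roots would span the nilradical of a proper parabolic subalgebra from which $N$ is induced, contradicting cuspidality. Next, I would construct a coherent family $\fmdc(N)$ containing $N$ as a slice via Mathieu's twisted localization: fix a commuting set $F = \{f_{\beta_1}, \ldots, f_{\beta_\ell}\}$ of root vectors acting injectively on $N$, form the Ore localization $N_F$ inverting these, and for each $\mu \in \fh^*$ define a twisted module $\Phi_\mu(N_F)$ by letting formal powers $f_{\beta_i}^{x_i}$ with $x_i \in \CC$ act according to the universal polynomial identities coming from the PBW filtration. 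Assembling $\fmdc(N) := \bigoplus_{\mu+\frlat \in \fh^*/\frlat} \Phi_\mu(N_F)_{\mu+\frlat}$ then produces a $\fg$-module with constant weight multiplicity equal to $\deg N$.

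The technical heart, and the main obstacle, is verifying that $\fmdc(N)$ satisfies the coherent-family axioms—in particular, polynomiality of the trace functions $\mu \mapsto \mathrm{tr}_{\fmdc(N)(\mu)} v$ for all $v \in \envalg{\fg}$—together with irreducibility and semisimplicity of $\fmdc(N)$ outside a suitable exceptional set. The polynomial trace property should reduce to expanding $v$ in PBW-normal form relative to $F$ and observing that each resulting coefficient, after the twist, depends polynomially on $\mu$. Irreducibility and semisimplicity of $\Phi_\mu(N_F)$ hold for $\mu$ off the zero locus of finitely many Shapovalov-type determinants, which cuts out the union $\singtwo(\fmdc(N))$ of $\ell+1$ codimension-one cosets described in Mathieu's classification. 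Finally, irreducibility of $\fmdc(N)$ as a coherent family follows because $N$ embeds as a simple slice and generates the entire family under localization and twist, and uniqueness of this extension (up to semisimplification) closes the bijection.
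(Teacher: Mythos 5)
This statement is cited verbatim from Mathieu \cite{MatCla00}; the paper offers no proof of its own, so there is nothing internal to compare against. Your outline tracks the strategy of Mathieu's paper faithfully at a high level: the easy inclusion via denseness of generic slices, the reduction of a simple cuspidal module to the dense case via Fernando's classification, and then the construction of a coherent family enclosing it by Ore localization along a commuting family of root vectors together with complex twists. That is indeed the architecture of \cite{MatCla00}.

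The genuine gap is the one you already flag but do not close. Establishing that $\fmdc(N)$ is a coherent family---in particular the uniform weight multiplicity across \emph{all} of $\fh^*$ (not just the localized slices), polynomiality of $\mu\mapsto\tr_{\fmdc(N)(\mu)}v$, and the precise determination of $\singtwo(\fmdc(N))$ as a union of exactly $\ell+1$ codimension-one cosets---is not a routine PBW calculation; it is the main body of Mathieu's paper, resting on an analysis of the annihilator (a primitive ideal with associated variety the minimal nilpotent orbit closure), a reduction to the degree-$1$ case, and a detailed case study in types $A$ and $C$. Similarly, ``uniqueness of the extension up to semisimplification'' is a theorem (Mathieu's uniqueness of coherent extensions), not an observation. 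As written, the proposal is a correct table of contents for Mathieu's proof rather than a proof, so it should either cite \cite{MatCla00} for these steps--as the paper does--or supply them.
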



\subsection{Mathieu's twisted localisation}\label{sec:mathieu}

From now on, recall the settings in \Cref{sec:setting}.
Let $S$ be the multiplicative subset of $\envalg{\fg}$ generated by 
$f_{\beta_1},\ldots,f_{\beta_\ell}$ (recall \eqref{eqn:delta2}).
Since $S$ is commutative, we have (Ore's) localisation $S^{-1}\envalg{\fg}$ of $\envalg{\fg}$ as in \cite{MatCla00}.

Let $\lam$ be an element of $\fwlat^{0,+}$.
We see that $\fmdv^0_\lam$ is a bounded module of degree 
\begin{equation}\label{eqn:deg}
\deg\fmdv^0_\lam=\dim \fmdh_\lam
\end{equation}
since $\fmdv^0_\lam$ is freely generated by $f_{\beta_1},\ldots,f_{\beta_\ell}$ from $\fmdh_\lam$.
Therefore, if $\lam\not\in\fwlat^+$, then $\fmdl_\lam$ is bounded while if
$\lam\in\fwlat^+$, it is finite-dimensional.
The elements of $S$ act injectively on $\fmdv^0_\lam$ and if $\lam\not\in\fwlat^+$, so do on $\fmdl_\lam$.
In these cases, the localisation
$S^{-1}\fmdl_\lam$ and $S^{-1}\fmdv^0_\lam$ are dense modules of weight support $\lam+\frlat$ and
they have the uniform
weight multiplicity 
$$\dim (S^{-1}\fmdl_\lam)(\mu)=\deg\fmdl_\lam \ \mbox{ and } 
\dim (S^{-1}\fmdv^0_\lam)(\mu)=\deg\fmdv^0_\lam\quad \mbox{ for all }\mu\in\lam+\frlat.
$$
If $\lam\in\fwlat^+$, then $S^{-1}\fmdl_\lam=0$.


Set $\fmdm=\fmdl_\lam,\fmdv^0_\lam$.
Let $x=(x_1,\ldots,x_\ell)$ be an element of $\CC^\ell$.
As in \cite{MatCla00}, we define the twist of $S^{-1}\fmdm$ by $f^x:=f_{\beta_1}^{x_1}f_{\beta_2}^{x_2}\cdots f_{\beta_\ell}^{x_\ell}$ using exponentials. 
Then the twist $f^x.S^{-1}\fmdm$ is again a dense module with
$\supp{f^x.S^{-1}\fmdm}=-\mu_x+\lam+\frlat$, where
$\mu_x=x_1\beta_1+\cdots +x_\ell\beta_\ell$.
We write 
$$
\Phi(\fmdm)=\bigoplus_{x=(x_1,\ldots,x_\ell)\in\CC^\ell, 0\leq Re(x_1),\ldots,Re(x_\ell)<1}
f^x.S^{-1}\fmdm.
$$
The functor $\Phi$ is called {\em (Mathieu's) twisted localisation functor}.
If $S^{-1}\fmdm$ is non-zero, then $\Phi(\fmdm)$ is a coherent family of $\fg$ with the same degree as $\fmdm$.
The semisimplification of $\Phi(\fmdm)$ is denoted by
$\Phi^{\semi}(\fmdm)$.

Let $\mu+\frlat$ be an element of $\fh^*/ \frlat$.
We have a $\fg$-submodule 
$$
\Phi_{\mu+\frlat}(\fmdm):=\bigoplus_{\nu\in\mu+\frlat}\Phi(\fmdm)(\nu)
$$
of $\Phi(\fmdm)$. 
Here, $\Phi(\fmdm)(\nu)$ is the weight space of weight $\nu$ of $\Phi(\fmdm)$.
The functor $\Phi_{\mu+\frlat}$ is also called a twisted localisation functor.

\subsection{Classification of simple cuspidal modules using $\fwlat^{0,+}$}

Let us take over the notations from the last Subsection.

\begin{theorem}\label{thm:coherent}
The modules $\Phi^\semi(\fmdl_\lam)$ with $\lam\in\fwlat^{0,+}\setminus\fwlat^+$ are the complete list of irreducible semisimple coherent families of $\fg$.
If $\lam\in\fwlat^+$, then $\Phi^\semi(\fmdl_\lam)$ is zero.
\end{theorem}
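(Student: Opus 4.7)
The statement splits into three assertions: the vanishing when $\lam\in\fwlat^+$, the fact that $\Phi^\semi(\fmdl_\lam)$ is an irreducible semisimple coherent family when $\lam\in\fwlat^{0,+}\setminus\fwlat^+$, and completeness of this list. The first is immediate: if $\lam\in\fwlat^+$ then $\fmdl_\lam$ is finite-dimensional and each $f_{\beta_i}$ acts nilpotently on it, so every element of $S$ is nilpotent on $\fmdl_\lam$ and the Ore localisation collapses, giving $S^{-1}\fmdl_\lam=0$ as already noted in \cref{sec:mathieu}; consequently $\Phi(\fmdl_\lam)$ and its semisimplification both vanish.

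For the second assertion, assume $\lam\in\fwlat^{0,+}\setminus\fwlat^+$. Then $\fmdl_\lam$ is bounded and the $S$-action is injective, so by the construction recalled in \cref{sec:mathieu} the module $\Phi(\fmdl_\lam)$ is a coherent family of $\fg$, whose semisimplification $\Phi^\semi(\fmdl_\lam)$ is tautologically semisimple. Irreducibility of the coherent family reduces to the simplicity of a single coset component, and the natural choice is the $(\lam+\frlat)$-component, which by the construction is exactly $S^{-1}\fmdl_\lam$ (the $x=0$ summand). The simplicity of $S^{-1}\fmdl_\lam$ follows from the simplicity of $\fmdl_\lam$ together with the injectivity of $S$: any nonzero $\fg$-submodule of $S^{-1}\fmdl_\lam$ may be cleared of denominators to produce a nonzero $\fg$-submodule of $\fmdl_\lam$, which must be all of $\fmdl_\lam$, and hence the original submodule is all of $S^{-1}\fmdl_\lam$.

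For completeness, given an arbitrary irreducible semisimple coherent family $\fmdc$ of $\fg$, I would exhibit $\lam\in\fwlat^{0,+}\setminus\fwlat^+$ with $\fmdc\cong\Phi^\semi(\fmdl_\lam)$. Using the description of $\singtwo(\fmdc)$ as a union of $\ell+1$ codimension-one cosets (\cref{sec:coherentcuspidal}), select a singular coset $\mu+\frlat$. On such a coset $\fmdc_{\mu+\frlat}$ is not a simple cuspidal module, so it admits non-cuspidal composition factors; by \cref{fernando} each such factor is parabolically induced from a proper Levi, and among these one can extract a simple $\fg_0$-integrable highest-weight constituent $\fmdl_\lam$ annihilated by $\fg_1$, so $\lam\in\fwlat^{0,+}$. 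Since $\fmdc_{\mu+\frlat}$ is infinite-dimensional, the part (i) already established forces $\lam\notin\fwlat^+$. The inclusion $\fmdl_\lam\hookrightarrow\fmdc$ then localises to an isomorphism $\Phi^\semi(\fmdl_\lam)\cong\fmdc$, using the fact that an irreducible semisimple coherent family is determined by its value on any single non-singular coset, which here is $S^{-1}\fmdl_\lam$.

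The main obstacle is the completeness step: one must show both the existence of a parabolically-dominant highest-weight constituent on each singular coset and that the resulting $\lam\in\fwlat^{0,+}$ uniquely reconstructs $\fmdc$, points that ultimately rest on Mathieu's detailed structural analysis of type-$A$ coherent families. The simplicity assertion used in the second step is comparatively routine, being a standard consequence of Ore localisation applied to a simple module on which the Ore set acts injectively.
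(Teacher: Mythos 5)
Your handling of the vanishing when $\lam\in\fwlat^+$ is fine, and you correctly recognise that the exhaustion step must rest on Mathieu's classification. However, there are two genuine gaps.

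\textbf{The simplicity claim for $S^{-1}\fmdl_\lam$ is false.} You argue that any nonzero $\fg$-submodule $N$ of $S^{-1}\fmdl_\lam$ can be cleared of denominators to meet $\fmdl_\lam$ nontrivially and hence contain all of $\fmdl_\lam$; this is correct. But you then conclude $N=S^{-1}\fmdl_\lam$, which does not follow: a $\envalg{\fg}$-submodule of $S^{-1}\fmdl_\lam$ has no reason to be stable under inverting elements of $S$. In fact $\fmdl_\lam$ itself, whose weight support is a proper subset of $\lam+\frlat$, is a nonzero proper $\fg$-submodule of $S^{-1}\fmdl_\lam$, so $S^{-1}\fmdl_\lam$ is \emph{never} simple. (Already for $A_1$: if $\lam\notin\Znn$, then $S^{-1}\fmdl_\lam$ has basis $\{f^n v_\lam:n\in\ZZ\}$ and contains $\fmdl_\lam=\{f^n v_\lam:n\geq 0\}$ as a proper submodule.) What is true is that $\lam+\frlat$ is one of the \emph{singular} cosets of $\Phi^\semi(\fmdl_\lam)$; irreducibility of the coherent family has to be established by exhibiting a non-singular coset, and that is part of Mathieu's structure theory (\cite[Lemma~8.3, Theorem~8.6]{MatCla00}) rather than a consequence of the naive Ore localisation argument. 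The same error propagates into your completeness argument, where you again treat $S^{-1}\fmdl_\lam$ as sitting on a non-singular coset.

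\textbf{Injectivity of the parametrisation is never addressed.} Your sketch, even granting Mathieu's theorem, only shows that every irreducible semisimple coherent family arises as some $\Phi^\semi(\fmdl_\lam)$; it does not show that the $\lam\in\fwlat^{0,+}\setminus\fwlat^+$ is unique, which is what ``complete list'' requires. This is precisely the new content of the paper's proof beyond the citation: Mathieu's Theorem~8.6 gives the same coherent family for highest weights in a single $s_\ell$-twisted orbit, and the paper verifies that if both $\lam$ and $\mu=s_\ell\circ\lam$ lie in $\fwlat^{0,+}\setminus\fwlat^+$ then necessarily $\lam=\mu$ (the constraint forces $\langle\lam,\alpha_\ell^\vee\rangle=-1$). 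Without this orbit analysis, Theorem~\ref{thm:coherent} is not proved.
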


\begin{proof}
By classification of simple cuspidal modules \cite[Theorem 8.6]{MatCla00} and \cite[Lemma 8.3]{MatCla00}, we see that the modules $\Phi^\semi(\fmdl_\lam)$ with $\lam\in\fwlat^{0,+}\setminus\fwlat^+$
exhaust all irreducible semisimple coherent families.

Let $\lam,\mu$ be elements of  $\fwlat^{0,+}\setminus\fwlat^+$ such that $\Phi^\semi(\fmdl_\lam)\cong \Phi^\semi(\fmdl_\mu)$.
It remains to show $\lam=\mu$.
By \cite[Theorem 8.6]{MatCla00} again, 
we see that $\mu=s_\ell\circ\lam$.
Here, $s_\ell$ is the simple reflection associated to $\alpha_\ell$.
The $\lam$ has the form $\lam=k_1\omega_1+\cdots+k_\ell\omega_\ell$
with $k_1,\ldots,k_{\ell-1}\in \Znn$ and $k_\ell\in\CC\setminus\Znn$.
It follows that $\mu=k_1\omega_1+\cdots+ k_{\ell-2}\omega_{\ell-2}+
(k_{\ell-1}+k_\ell+1)\omega_{\ell-1}+(-k_\ell-2)\omega_\ell$.
Since $\mu\in \fwlat^{0,+}\setminus\fwlat^+$, we have $k_\ell=-1$,
which forces $\mu=\lam$.
Thus, we have proved the assertion.
\end{proof}

Let $\lam$ be an element of $\fwlat^{0,+}\setminus\fwlat^+$.
We set $\sing(\lam):=\singtwo(\Phi^{\semi}(\fmdl_\lam))$.
Since any composition factor of $\Phi(\fmdl_\lam)$ which is a simple cuspidal module is a direct summand of $\Phi(\fmdl_\lam)$, together with \Cref{sec:coherentcuspidal}, we have the following corollary.

\begin{corollary}\label{thm:cuspidal}
The collection of
$$
\Phi_{\mu+\frlat}(\fmdl_\lam)\qquad (\lam\in \fwlat^{0,+}\setminus\fwlat^+,
\mu+\frlat\in(\fh^*/\frlat)\setminus \sing(\lam)),
$$
is the complete set of simple cuspidal modules over $\fg$.
Moreover, if $\lam\in\fwlat^+$, then $\Phi(\fmdl_\lam)=0$.
\end{corollary}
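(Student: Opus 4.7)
The plan is to bootstrap the corollary from the two results immediately preceding it. The second assertion is the easiest: if $\lam\in\fwlat^+$, then $\fmdl_\lam$ is finite-dimensional, each $f_{\beta_i}$ acts locally nilpotently on it, and so $S^{-1}\fmdl_\lam=0$. By the very definition of the twisted localisation functor in \Cref{sec:mathieu}, $\Phi(\fmdl_\lam)=0$ as well.

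For the main assertion, let $L$ be any simple cuspidal $\fg$-module. By \Cref{sec:coherentcuspidal}, $L\cong\fmdc_{\mu+\frlat}$ for some irreducible semisimple coherent family $\fmdc$ and some $\mu+\frlat\in(\fh^*/\frlat)\setminus\singtwo(\fmdc)$. By \Cref{thm:coherent}, such $\fmdc$ must be of the form $\Phi^\semi(\fmdl_\lam)$ for a unique $\lam\in\fwlat^{0,+}\setminus\fwlat^+$, and $\sing(\lam)=\singtwo(\fmdc)$ by definition. Thus every simple cuspidal $\fg$-module is realised as $\bigl(\Phi^\semi(\fmdl_\lam)\bigr)_{\mu+\frlat}$ for a pair $(\lam,\mu+\frlat)$ of the form appearing in the statement.

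What remains is the identification $\Phi_{\mu+\frlat}(\fmdl_\lam)=\bigl(\Phi^\semi(\fmdl_\lam)\bigr)_{\mu+\frlat}$ when $\mu+\frlat\notin\sing(\lam)$. Both modules are the $(\mu+\frlat)$-coset pieces of coherent families of the same degree $\deg\fmdl_\lam$, and therefore share the uniform weight multiplicity $\deg\fmdl_\lam$ on every weight in $\mu+\frlat$. By hypothesis the right-hand side is a simple cuspidal module, and it necessarily appears as a composition factor of the left-hand side since semisimplification preserves the list of composition factors. Invoking the direct-summand property stated just before the corollary, namely that every simple cuspidal composition factor of $\Phi(\fmdl_\lam)$ is in fact a direct summand, this cuspidal piece splits off inside $\Phi_{\mu+\frlat}(\fmdl_\lam)$, and the equality of uniform weight multiplicities then forces the complementary summand to vanish.

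The main obstacle is the direct-summand property itself, which is the only non-formal input. I would cite it as a standard consequence of Mathieu's analysis of coherent families in \cite{MatCla00}: the polynomial trace condition rigidifies coherent families enough that cuspidal constituents cannot appear non-trivially as proper submodules or quotients of non-cuspidal ones, forcing them to split off. Granted this, the remainder of the argument is bookkeeping: assembling the bijection between simple cuspidal modules and pairs $(\lam,\mu+\frlat)$ with $\lam\in\fwlat^{0,+}\setminus\fwlat^+$ and $\mu+\frlat\notin\sing(\lam)$.
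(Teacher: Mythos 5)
Your proposal is correct and follows essentially the same route as the paper: the paper compresses the argument into a single sentence invoking the direct-summand property of cuspidal composition factors of~$\Phi(\fmdl_\lam)$, combined with \Cref{thm:coherent} and \Cref{sec:coherentcuspidal}, while you spell out the same chain. Your treatment of $\lam\in\fwlat^+$ via nilpotence of $f_{\beta_i}$ on the finite-dimensional module $\fmdl_\lam$ is exactly what the paper has in mind.

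One small simplification is available at the point where you invoke the direct-summand property: once you know $(\Phi^\semi(\fmdl_\lam))_{\mu+\frlat}=(\Phi_{\mu+\frlat}(\fmdl_\lam))^\semi$ is simple and that $\Phi_{\mu+\frlat}(\fmdl_\lam)$ has finite length (it is bounded of degree $\deg\fmdl_\lam$), the fact that its semisimplification is simple already forces $\Phi_{\mu+\frlat}(\fmdl_\lam)$ to have a single composition factor, hence to be simple; no splitting argument is then needed. The direct-summand statement is still the language the paper uses, and citing it to Mathieu is the right move, so your version is defensible as written — it just carries one redundant step.
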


\section{Relaxed \hw  modules with cuspidal top spaces over $\uaffvoa{k}{\fg}$}\label{sec:relaxed}

\subsection{Relaxed \hw  module}\label{sec:relaxed1}

Let $\mdm$ be a weight $\uaffvoa{k}{\fg}$-module.
A {\em relaxed \hw  vector} is a weight vector $v\in\mdm$ such that
$(\fg[t]t).v=0$.
If $\mdm$ is generated by a relaxed \hw  vector, $\mdm$ is called 
a {\em relaxed \hw  module}.

Let $\mdm$ be a relaxed \hw  module with a relaxed \hw  vector $v\in\mdm$.
Then $\mdm$ is a positive energy module and the top space $\mdm_{top}$ of $\mdm$ is $\envalg{\fg}.v$.
The {\em almost simple quotient} of $\mdm$ is the quotient module
$\mdm/\mdn$ of $\mdm$ by the sum $\mdn$ of all submodules of $\mdm$ which intersects
$\mdm_{top}$ trivially.
The $\mdm$ is called {\em almost simple} if it is an almost simple quotient.
An almost simple module is simple if and only if the top space is simple.

Let $\fmdm$ be a weight $\fg\oplus \CC D\oplus \CC K$-module of level $k$.
We define the induced module
$$
\mdv_{\fmdm}:=\envalg{\ag}\otimes_{\fg[t]\oplus\CC K\oplus \CC D}\fmdm.
$$
If $\fmdm$ is generated by a single element of $\fmdm$, then $\mdv_{\fmdm}$
is a relaxed \hw  module and called a {\em relaxed Verma module}.
The almost simple quotient of $\mdv_{\fmdm}$ is denoted by
$\mdl_{\fmdm}$.


\begin{definition}
A relaxed \hw  module $\mdm$ of $\uaffvoa{k}{\fg}$ has a {\em cuspidal top space} if $\zhu{\mdm}$ is a cuspidal module.
\end{definition}

When $\fg=A_1$, relaxed \hw  modules with cuspidal top spaces 
are fundamental in \cite{CreMod13}.

\subsection{Twisted localisation for $\ag$}\label{sec:tw}
Recall that $S$ is the multiplicative subset of $\ufg$ generated by $f_{\beta_1},\ldots,f_{\beta_\ell}$.
The set $S$ is embedded in $\ug$.
As in \Cref{sec:mathieu}, let us consider localisation $S^{-1}\envalg{\ag}$ of
the universal enveloping algebra of $\ag$.

Let $\Lam$ be an element of 
$\wlat^{0,+}$.
We set $\mdm=\mdv^0_\Lam$ or $\mdl_\Lam$.
Setting $\lam:=\fLam$, we see that the top space of $\mdm$ is the $\fg$-module 
$\fmdm:=\fmdv^0_{\lam}, \fmdl_{\lam}$, respectively.

We define localisation of $S^{-1}\mdm$ as before.
The module $S^{-1}\mdm$ is non-zero if $\mdm=\mdv^0_\Lam$ or $\lam\not\in \fwlat^+$.
The twist of $f^x.S^{-1}\fmdm$ for $x\in\CC^\ell$ is also defined as in \Cref{sec:mathieu}.
The twisted localisation $\Phi(\mdm)$ of $\mdm$ is similarly defined to be
$$
\Phi(\mdm)=\bigoplus_{x=(x_1,\ldots,x_\ell)\in\CC^\ell, 0\leq Re(x_1),\ldots,Re(x_\ell)<1}
f^x.S^{-1}\mdm.
$$
The top space of $\Phi(\mdm)$ is isomorphic to $\Phi(\fmdm)$.
Let $\mu+\frlat$ be an element of $\fh^*/ \frlat$.
We set $\Phi_{\mu+\frlat}(\mdm):=\bigoplus_{\nu\in\mu+\frlat+k\Lam_0+\CC \delta}\Phi(\mdm)(\nu)$.

The following proposition is proved similarly as \cite[Lemma 13.2]{MatCla00}.

\begin{proposition}\label{sec:generates}
$\Phi(\mdm)$ is generated by the top space $\Phi(\fmdm)$ as a $\ag$-module.
\end{proposition}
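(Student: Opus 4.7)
The plan is to reduce the proposition to the analogue of Mathieu's Lemma 13.2 by reconstructing the module $\mdm$ from its top space via the $\ag$-action and then checking that localisation and twisting commute with this reconstruction.

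First, I would record that $\mdm$ is already generated by $\fmdm$ as an $\ag$-module: for $\mdm=\mdv^0_\Lam$ this is the very definition of the parabolic Verma module (as an induced module from $\fg[t]+\CC K+\CC D$ on $\fmdh_\lam$), and for $\mdm=\mdl_\Lam$ the simple quotient contains the top space $\fmdl_\lam$ and is therefore generated by it. The second preparatory ingredient is the local nilpotency of $\mathrm{ad}(f_{\beta_i})$ on $\aug$: each $f_{\beta_i}\otimes 1\in\ag$ satisfies $[f_{\beta_i}\otimes 1,y\otimes t^n]=[f_{\beta_i},y]\otimes t^n$ (the central term vanishes because $m=0$), and kills $K,D$; since $\mathrm{ad}(f_{\beta_i})$ is nilpotent on the finite-dimensional $\fg$, the PBW theorem yields local nilpotency on $\aug$.

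Local nilpotency is precisely what is needed to run the standard Ore-type manipulation: for any $s\in S$ and $u\in\aug$, repeated application of the identity $s^{-1}u=us^{-1}-s^{-1}[u,s]s^{-1}$ terminates and produces an expression $s^{-1}u=u's'^{-1}$ in $S^{-1}\aug$. Since $S$ acts injectively on $\fmdm$ (explicitly recorded in \cref{sec:mathieu} for $\fmdv^0_\lam$, and for $\fmdl_\lam$ when $\lam\notin\fwlat^+$), the subspace $S^{-1}\fmdm\subset S^{-1}\mdm$ is well-defined, and the Ore identity lets me rewrite any element $s^{-1}uv\in S^{-1}\mdm$ (with $v\in\fmdm$) as $u's'^{-1}v\in\aug\cdot S^{-1}\fmdm$. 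Together with Step 1 this shows $S^{-1}\mdm=\aug\cdot S^{-1}\fmdm$.

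The remaining task is to transfer this to the twist $f^x\cdot S^{-1}\mdm$, and this is the genuine technical core, directly parallel to \cite[Lemma 13.2]{MatCla00}. The twisted module is isomorphic to $S^{-1}\mdm$ as a vector space, but with $\ag$-action conjugated by the formal operator $f^x$; for each $y\in\ag$ one has an identity of the form $y\cdot (f^x w)=f^x\cdot (P_y(x)\cdot w)$ where $P_y(x)$ is a finite $\CC[x_1,\dots,x_\ell]$-linear combination of elements of $S^{-1}\aug$, the finiteness again coming from nilpotency of $\mathrm{ad}(f_{\beta_i})$. Consequently $\aug\cdot(f^x\cdot S^{-1}\fmdm)$ contains $f^x\cdot(\aug\cdot S^{-1}\fmdm)=f^x\cdot S^{-1}\mdm$, and summing over representatives $x$ with $0\le\Re(x_i)<1$ yields $\Phi(\mdm)=\aug\cdot\Phi(\fmdm)$. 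The identification of $\Phi(\fmdm)$ with the top space of $\Phi(\mdm)$ is then immediate from the fact that $f_{\beta_i}$, $S$, and the twist $f^x$ all preserve the $-D$-grading up to shifts by $\delta$, so the minimal $-D$-component of each summand $f^x\cdot S^{-1}\mdm$ is $f^x\cdot S^{-1}\fmdm$.

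The main obstacle I expect is the bookkeeping in the twisted setting: one must carefully verify that the polynomial-in-$x$ operators $P_y(x)$ really lie in $S^{-1}\aug$ and that acting on $f^x\cdot S^{-1}\fmdm$ sweeps out all of $f^x\cdot S^{-1}\mdm$. This is exactly the content of Mathieu's argument for $\ufg$, and its adaptation to $\aug$ is mechanical once the local nilpotency of $\mathrm{ad}(f_{\beta_i})$ on $\aug$ is in hand.
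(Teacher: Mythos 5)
Your argument is correct and proceeds by the same mechanism as the paper's: ad-nilpotency of the $f_{\beta_i}$ combined with conjugation by $f^x$. The paper merges your Ore-type step and your twist step into a single calculation, expanding $f^x a f^{-x}$ as a finite sum of terms of the form $\mathrm{ad}(f_1)^{j_1}\cdots\mathrm{ad}(f_\ell)^{j_\ell}(a)\,f_\ell^{-j_\ell}\cdots f_1^{-j_1}$ and observing directly that each such term sends $f^x.w$ into $\envalg{\ag}\cdot\Phi(\fmdm)$.
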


Now let us consider $\mdm=\mdl_\Lam$.
We see that $\Phi(\mdl_\Lam)$ is isomorphic to the almost simple relaxed \hw  module $\mdl_{\Phi(\fmdl_\lam)}$:
\begin{equation}\label{eqn:commuteloc}
\Phi(\mdl_\Lam)\cong \mdl_{\Phi(\fmdl_\lam)}.
\end{equation}

For the completeness, we give proofs of \Cref{sec:generates} and \eqref{eqn:commuteloc}
in \Cref{sec:appendix1}.

Let $\mu+\frlat$ be an element of $\fh^*/ \frlat$.
Then $\Phi_{\mu+\frlat}(\mdl_\Lam)$ is isomorphic to the almost simple
relaxed \hw  module $\mdl_{\Phi_{\mu+\frlat}(\fmdl_\lam)}$ with a cuspidal top space.
Therefore, (a) in \cref{sec:thmsummary1} 
follows from \Cref{thm:cuspidal}.


Let $\lam$ be an element of $\fwlat^{0,+}\setminus\fwlat^+$.
The following proposition is clear and the proof is given in \Cref{sec:appendix2}.

\begin{proposition}\label{sec:duflo}
The module $\fmdl_\lam$ is a $\zhu{\affvoa{k}{\fg}}$-module if and only if 
so is $\Phi(\fmdl_\lam)$.
\end{proposition}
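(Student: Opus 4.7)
The plan is to reformulate both conditions uniformly: a $\fg$-module $N$ (viewed as a $\zhu{\uaffvoa{k}{\fg}} \cong \envalg{\fg}$-module) descends to a $\zhu{\affvoa{k}{\fg}} \cong \envalg{\fg}/\aai_k$-module exactly when the two-sided ideal $\aai_k$ annihilates $N$. The proposition then reduces to the equivalence
\[
\aai_k \cdot \fmdl_\lam = 0 \iff \aai_k \cdot \Phi(\fmdl_\lam) = 0.
\]

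I would prove ($\Leftarrow$) first, since this is the easy direction. By the assumption $\lam \in \fwlat^{0,+} \setminus \fwlat^+$, the module $\fmdl_\lam$ is bounded and infinite-dimensional, so the elements $f_{\beta_1}, \dots, f_{\beta_\ell}$ of $S$ act injectively on $\fmdl_\lam$ (as noted in \Cref{sec:mathieu}). The canonical map $\fmdl_\lam \to S^{-1}\fmdl_\lam$ is thus injective, and $S^{-1}\fmdl_\lam$ is the $x = 0$ summand of $\Phi(\fmdl_\lam)$; so annihilation on $\Phi(\fmdl_\lam)$ restricts to annihilation on $\fmdl_\lam$.

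For ($\Rightarrow$), assume $\aai_k \cdot \fmdl_\lam = 0$, and verify annihilation on each direct summand $f^x \cdot S^{-1}\fmdl_\lam$ of $\Phi(\fmdl_\lam)$ in two steps. Step one (localisation): the natural inclusion $\aai_k \hookrightarrow S^{-1}\aai_k$ realises $\aai_k$ inside the two-sided ideal $S^{-1}\aai_k$ of the Ore localisation $S^{-1}\envalg{\fg}$, and the tensor-product description $S^{-1}\fmdl_\lam = S^{-1}\envalg{\fg} \otimes_{\envalg{\fg}} \fmdl_\lam$ gives $S^{-1}\aai_k \cdot S^{-1}\fmdl_\lam = 0$, hence a fortiori $\aai_k \cdot S^{-1}\fmdl_\lam = 0$. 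Step two (twist): for $u \in \envalg{\fg}$, the action on $f^x \cdot S^{-1}\fmdl_\lam$ is intertwined via $f^x$ with the action of
\[
\operatorname{Ad}(f^{-x})(u) \;=\; \sum_{n_1,\dots,n_\ell \ge 0} \prod_{i=1}^{\ell} \binom{-x_i}{n_i} \operatorname{ad}(f_{\beta_i})^{n_i}(u)
\]
on $S^{-1}\fmdl_\lam$, where on any fixed weight vector the sum is finite by weight considerations. Since $\aai_k$ is a two-sided ideal, it is stable under $\operatorname{ad}(\fg)$, so each summand lies in $\aai_k$ and annihilates $S^{-1}\fmdl_\lam$ by step one.

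The main technical obstacle is step two: one must check that $\operatorname{Ad}(f^{-x})$, \emph{a priori} an infinite series of commutators, is well-defined as an operator on each weight vector of $S^{-1}\fmdl_\lam$, and that the two-sidedness of $\aai_k$ together with the finiteness of each sum lets one conclude termwise annihilation. This is a routine but careful bookkeeping argument with weight gradings, completely analogous to the intertwining computation underlying Mathieu's twisted localisation, and I expect to import it verbatim from \Cref{sec:mathieu} after checking compatibility with the $\hhat$-grading on $\ag$-modules.
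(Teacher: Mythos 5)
Your proposal is correct and follows essentially the same route as the paper: the paper proves the two sub-lemmas you describe as \cref{sec:annloc} (annihilator preserved under $S^{-1}$, argued by a telescoping $\operatorname{ad}(g^r)$-computation rather than by appealing to Ore-localization of ideals, but using the same ingredient that the annihilator is a two-sided ideal, hence $\operatorname{ad}$-stable, and that $\operatorname{ad}$ by elements of $S$ is locally nilpotent) and \cref{sec:anntw} (annihilator preserved under the twist $f^x$, via termwise membership in the ideal exactly as in your Step two). Your reformulation in terms of $\aai_k$ rather than the full annihilator is only cosmetic, and your ($\Leftarrow$) direction via the embedding $\fmdl_\lam\ira S^{-1}\fmdl_\lam$ matches the trivial inclusion the paper uses.
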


Thus, we have the following theorem by \cref{sec:zhu}.

\begin{theorem}\label{sec:simplecoherent}
Let $\Lam$ be an element of $\wlat^{0,+}_k$ such that $\fLam\not\in\fwlat^+$.
The $\uaffvoa{k}{\fg}$-module $\Phi(\mdl_\Lam)$ is a $\affvoa{k}{\fg}$-module if and only if
so is $\mdl_\Lam$.
\end{theorem}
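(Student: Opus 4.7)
The plan is to reduce the statement to Proposition \ref{sec:duflo} via the Zhu algebra correspondence, exploiting that the top space of $\Phi(\mdl_\Lam)$ is $\Phi(\fmdl_\lam)$ and that this top space generates $\Phi(\mdl_\Lam)$ by Proposition \ref{sec:generates}. Recall that $\affvoa{k}{\fg} = \uaffvoa{k}{\fg}/J$ for the maximal proper (graded) ideal $J$ of $\uaffvoa{k}{\fg}$, and that $\aai_k$ is by definition the image of $J$ in $\zhu{\uaffvoa{k}{\fg}} = \envalg{\fg}$. A positive energy $\uaffvoa{k}{\fg}$-module $\mdm$ is an $\affvoa{k}{\fg}$-module precisely when $Y^{\mdm}(j,z)=0$ for all $j\in J$.

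First, I would handle $\mdl_\Lam$. Since $\mdl_\Lam$ is simple with top space $\fmdl_\lam$, \cref{sec:zhu} and the definition of $\aai_k$ give the clean equivalence: $\mdl_\Lam$ is an $\affvoa{k}{\fg}$-module $\iff$ $\aai_k.\fmdl_\lam = 0$ $\iff$ $\fmdl_\lam$ is a $\zhu{\affvoa{k}{\fg}}$-module. Next, for $\Phi(\mdl_\Lam)$, which is positive energy with top space $\Phi(\fmdl_\lam)$ but generally not simple, I would establish the following module-theoretic lemma: if $\mdm$ is a $\uaffvoa{k}{\fg}$-module generated (under the action of all modes) by a subspace $W\subset \mdm$ with $Y^{\mdm}(j,z)w=0$ for every $j\in J$ and $w\in W$, then $Y^{\mdm}(j,z)=0$ on all of $\mdm$. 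This is proved by checking that $\mdn := \{m\in\mdm : Y^{\mdm}(j,z)m=0 \text{ for all } j\in J\}$ is a $\uaffvoa{k}{\fg}$-submodule of $\mdm$; indeed, for $a\in \uaffvoa{k}{\fg}$ and $m\in\mdn$, the Borcherds identity expresses $j^{\mdm}(p)\,a^{\mdm}(q).m$ in terms of $a^{\mdm}(\cdot)j^{\mdm}(\cdot).m$ and modes of $a(i)j$ and $j(i)a$ applied to $m$, all of which lie in $J$ because $J$ is an ideal.

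Applied to $W=\Phi(\fmdl_\lam)$ inside $\mdm=\Phi(\mdl_\Lam)$, and combined with \cref{sec:generates}, this lemma yields: $\Phi(\mdl_\Lam)$ is an $\affvoa{k}{\fg}$-module $\iff$ $\aai_k.\Phi(\fmdl_\lam)=0$ $\iff$ $\Phi(\fmdl_\lam)$ is a $\zhu{\affvoa{k}{\fg}}$-module. The three equivalences, together with \cref{sec:duflo}, chain to give the theorem.

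The only real content beyond the results already quoted is the submodule lemma; the main obstacle is carrying out the Borcherds-identity check carefully, keeping track of which modes of which ideal elements appear, and confirming that for positive energy modules the infinite sums in the identity are finite when evaluated on any fixed vector of $\mdm$. This is standard but worth spelling out explicitly, since the non-simplicity of $\Phi(\mdl_\Lam)$ prevents a direct appeal to \cref{sec:zhu}.
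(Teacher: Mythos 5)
Your overall strategy — route through \cref{sec:duflo} and Zhu's correspondence, and supply an extra lemma to handle the non-simplicity of $\Phi(\mdl_\Lam)$ — matches the paper's proof, which is essentially the same idea stated in one line. Your submodule lemma (that $\mdn=\{m : Y^{\mdm}(j,z)m=0\ \forall j\in J\}$ is a $\uaffvoa{k}{\fg}$-submodule, proved via the Borcherds/commutator formula and the ideal property of $J$) is correct.

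However, there is a genuine gap in the link ``$\Phi(\mdl_\Lam)$ is an $\affvoa{k}{\fg}$-module $\iff \aai_k.\Phi(\fmdl_\lam)=0$.'' Your lemma requires $Y^{\mdm}(j,z)w=0$ for \emph{every mode} of every $j\in J$ and every $w\in\Phi(\fmdl_\lam)$; but $\aai_k.\Phi(\fmdl_\lam)=0$ only says that the \emph{degree-preserving} (Zhu-algebra) modes of $J$ annihilate the top space. The degree-lowering modes kill $\Phi(\fmdl_\lam)$ automatically by positive energy, but the degree-raising modes $j(n)$ with $n<\Delta_j-1$ need not annihilate the top space just because $\aai_k$ does — this is precisely why the full Verma module $\mdv_T$ over $\uaffvoa{k}{\fg}$ is generally not an $\affvoa{k}{\fg}$-module even when $\aai_k.T=0$. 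You need a further argument that is special to the module $\Phi(\mdl_\Lam)$. Two standard fixes: (i) use almost-simplicity of $\Phi_{\mu+\frlat}(\mdl_\Lam)$: the submodule $\hat J.\mdm$ must meet the top space if nonzero, and the part of $\hat J.\mdm$ landing in the top space is governed by the image of $\hat J\cap\envalg{\uaffvoa{k}{\fg}}_0$ in the Zhu algebra, i.e.\ by $\aai_k$, so $\aai_k.\Phi(\fmdl_\lam)=0$ forces $\hat J.\mdm=0$; or (ii) avoid the top space entirely in this direction: from $\hat J.\mdl_\Lam=0$ deduce $\hat J.\Phi(\fmdl_\lam)=0$ by running the annihilator-preservation arguments of \cref{sec:annloc} and \cref{sec:anntw} inside $\envalg{\ghat}$ rather than $\envalg{\fg}$ (noting $\ad(f_{\beta_i})$ is still locally nilpotent on $\envalg{\ghat}$), and then your submodule lemma applies as stated. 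Either repair is short, but as written the middle equivalence in your chain is asserted, not proved.
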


\begin{corollary}\label{sec:simplecuspidal}
The list of all $\Phi_{\mu+\frlat}(\mdl_\Lam)$ with data
\begin{enumerate}
\item $\Lam\in \wlat^{0,+}_k$ such that $\fLam\not\in \fwlat^+$ and $\mdl_\Lam$ is a $\affvoa{k}{\fg}$-module, and
\item $\mu+\frlat\in(\fh^*/\frlat)\setminus\sing(\fLam)$
\end{enumerate}
is the complete list of simple relaxed \hw  modules with cuspidal top spaces over $\affvoa{k}{\fg}$.
\end{corollary}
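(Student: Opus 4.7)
The plan is to combine Theorem~\ref{sec:thmsummary1}(a) -- which classifies simple relaxed \hw  modules with cuspidal top spaces over the \emph{universal} affine vertex algebra $\uaffvoa{k}{\fg}$ -- with \Cref{sec:simplecoherent}, which characterises precisely when such a module descends to the \emph{simple} quotient $\affvoa{k}{\fg}$. The whole content of the corollary should fall out from their conjunction: the only real work is to check that the ``descent to $\affvoa{k}{\fg}$'' condition passes freely between $\mdl_\Lam$ and the cuspidal piece $\Phi_{\mu+\frlat}(\mdl_\Lam)$.

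For the inclusion direction, I would fix data $\Lam$ and $\mu+\frlat$ satisfying (1) and (2) and invoke Theorem~\ref{sec:thmsummary1}(a) to present $\Phi_{\mu+\frlat}(\mdl_\Lam)$ as a simple relaxed \hw  $\uaffvoa{k}{\fg}$-module with cuspidal top space $\Phi_{\mu+\frlat}(\fmdl_{\fLam})$. Hypothesis (1) says $\mdl_\Lam$ is already an $\affvoa{k}{\fg}$-module, so \Cref{sec:simplecoherent} upgrades $\Phi(\mdl_\Lam)$ to an $\affvoa{k}{\fg}$-module. Since $\Phi_{\mu+\frlat}(\mdl_\Lam)$ sits as an $\ag$-submodule of $\Phi(\mdl_\Lam)$, it inherits the $\affvoa{k}{\fg}$-module structure at once.

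For the completeness direction, let $\mdr$ be any simple relaxed \hw  $\affvoa{k}{\fg}$-module with cuspidal top space. It is tautologically a $\uaffvoa{k}{\fg}$-module, so Theorem~\ref{sec:thmsummary1}(a) presents $\mdr \cong \Phi_{\mu+\frlat}(\mdl_\Lam)$ for some $\Lam\in\wlat^{0,+}_k$ with $\fLam\not\in\fwlat^+$ and $\mu+\frlat\notin\sing(\fLam)$. What remains is to verify that $\mdl_\Lam$ itself descends to $\affvoa{k}{\fg}$. By the Zhu correspondence of \Cref{sec:zhu}, this reduces to showing that the defining ideal $\aai_k\subset\zhu{\uaffvoa{k}{\fg}}\cong\envalg{\fg}$ annihilates the top space $\fmdl_{\fLam}$. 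The hypothesis on $\mdr$ gives that $\aai_k$ annihilates $\zhu{\mdr}=\Phi_{\mu+\frlat}(\fmdl_{\fLam})$; promoting this to the full coherent family $\Phi(\fmdl_{\fLam})$ (using that all its simple cuspidal summands share the same annihilator in $\envalg{\fg}$) and then invoking \Cref{sec:duflo}, I conclude that $\aai_k$ also annihilates $\fmdl_{\fLam}$. \Cref{sec:simplecoherent} then upgrades $\mdl_\Lam$ to an $\affvoa{k}{\fg}$-module, closing the loop.

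The main point of care is the completeness step, specifically the transfer of annihilation from the single cuspidal summand $\Phi_{\mu+\frlat}(\fmdl_{\fLam})$ back to the original $\fmdl_{\fLam}$ across the twisted localisation. This relies on two ingredients already established above: that the simple cuspidal summands of an irreducible semisimple coherent family of $\fg$ share the same annihilator in $\envalg{\fg}$ (a standard consequence of Mathieu's analysis used to prove \Cref{thm:cuspidal}), and the equivalence of \Cref{sec:duflo}. Everything else in the argument is a direct appeal to the preceding theorems.
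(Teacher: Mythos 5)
Your argument is essentially the paper's (the paper treats the corollary as immediate from \cref{sec:thmsummary1}(a) together with \cref{sec:simplecoherent}, and your explicit write-up fills in the reasoning along the same lines). The inclusion direction is fine.

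On the completeness direction, the step ``promoting this to the full coherent family $\Phi(\fmdl_{\fLam})$ (using that all its simple cuspidal summands share the same annihilator)'' is stated imprecisely: $\Phi(\fmdl_{\fLam})$ is not in general semisimple, so it does not decompose into simple cuspidal summands, and quoting a fact about simple summands only controls the annihilator of the semisimplification, not of $\Phi(\fmdl_{\fLam})$ itself. What you actually need is a fact the paper records directly in the appendix: by \cref{sec:anntw} and \cref{sec:annloc}, the single piece $\Phi_{\mu+\frlat}(\fmdl_{\fLam}) \cong f^{x}.S^{-1}\fmdl_{\fLam}$ has the \emph{same} annihilator in $\envalg{\fg}$ as $\fmdl_{\fLam}$ itself. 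That identity hands you $\aai_k \subset \ann{\fmdl_{\fLam}}{\ufg}$ directly, without routing through $\Phi(\fmdl_{\fLam})$ and \cref{sec:duflo}. (Indeed \cref{sec:duflo} is itself proved from exactly those annihilator equalities.) So: right conclusion, right key ingredient, but the intermediate justification should be replaced by the direct use of \cref{sec:annloc} and \cref{sec:anntw}.
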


Thus, from now on, we will study $\Phi(\mdl_\Lam)$ with $\Lam\in \wlat^{0,+}_k$.

\subsection{Characters of relaxed \hw  modules}\label{sec:char1}

Let $\mdm$ be a weight module over $\uaffvoa{k}{\fg}$.
The {\em string function} of $\mdm$ at $\lam\in\fh^*$ is the $q$-series
$$
s_\lam[\mdm](q)=\sum_{n\in\CC}\dim \mdm(k\Lam_0+\lam-n\delta)q^{n-c/24},
$$
where $c$ is the central charge of $\uaffvoa{k}{\fg}$ \eqref{eqn:caff}.
The {\em (full) character} of $\mdm$ is
$$
\ch[\mdm](y,q,z)=y^k \sum_{\lam\in\fh^*}s_\lam[\mdm](q)z^\lam,
$$
For a weight $\fg$-module $\fmdm$, we also define the character
$$
\ch[\fmdm](z)=\sum_{\lam\in\fh^*}\dim\fmdm(\lam)z^\lam.
$$

Let $\Lam$ be an element of $\wlat^{0,+}_k$ and set $\lam=\fLam$.
Recall the irreducible finite-dimensional module $\fmdh_\lam$ over $\fg_0$
in \cref{sec:graded} and Euler's $\varphi$ series $\varphi(q)=\prod_{n=1}^\infty (1-q^n)$.
The characters of the parabolic Verma modules $\fmdv^0_\lam$ and $\mdv^0_\Lam$ are given by
\begin{align*}
&\ch[\fmdv^0_\lam](z)=\frac{\ch[\fmdh_\lam](z)}
{(1-z^{-\beta_1})\cdots(1-z^{-\beta_\ell})},\\
&\ch[\mdv^0_\Lam](y,q,z)=\frac{y^kq^{h_\Lam-c/24}\ch[\fmdv^0_\lam](z)}
{ \varphi(q)^\ell\prod_{i\geq1,\alpha\in\fDelta}(1-q^iz^\alpha)},
\end{align*}
where $h_\Lam=\Lam(-D)$.
Therefore, setting $\beta_0=\beta_1+\cdots+\beta_\ell$, we see that the following limiting string function exists and has the form
\begin{equation}\label{eqn:limverma}
\lim_{n\rightarrow -\infty}s_{\lam+n\beta_0}[\mdv^0_\Lam](q)=q^{h_\Lam-c/24}\frac{\deg\fmdv^0_\lam}{\varphi(q)^{\dim\fg}}.
\end{equation}
(Recall \eqref{eqn:deg}.)

Let $\mdm$ be a module in category $\cO^0_k$ whose weight support as a $\fg$-module
lies in a single coset in $\fh^*/\frlat$ and let $\lam$ be a representative of the weight support
of $\mdm$: 
$\supp{\mdm}\subset \lam+\frlat$.
By the existence of the limiting string function of the parabolic Verma modules, we see that the following limiting string function exists and does not depend on the choice of the representative $\lam$ of $\supp{\mdm}$:
$$
s_{-\infty}[\mdm](q):=\lim_{n\rightarrow -\infty}s_{\lam+n\beta_0}[\mdm](q).
$$

The following lemma is clear.

\begin{lemma}\label{sec:limiting}
$$
\ch[\Phi(\mdm)](y,q,z)=y^k s_{-\infty}[\mdm](q)\sum_{\mu\in\frlat}z^\mu.
$$
\end{lemma}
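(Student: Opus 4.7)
The plan is to reduce the computation to a single $\frlat$-coset inside $\fh^{*}$ and exploit the fact that localising by $S=\{f_{\beta_{1}},\dots,f_{\beta_{\ell}}\}$ forces all weight spaces within that coset to have the same multiplicity at each conformal weight, namely the stable value captured by the limiting string function. Throughout I use crucially that each $f_{\beta_{i}}\in\fg\otimes 1$ sits in $t$-degree zero, hence commutes with $D$.

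First, by the very definition of twisted localisation, $\Phi(\mdm)=\bigoplus_{x}f^{x}.S^{-1}\mdm$, and every twist $f^{x}$ translates the $\fh$-weight by $-\mu_{x}$ while preserving the $D$-grading. Each coset of $\fh^{*}/\frlat$ is therefore reached by exactly one choice of $x$, so it suffices to establish the formula for $S^{-1}\mdm$ on the coset $\lam+\frlat$ containing $\supp{\mdm}$; the contribution of every other coset is then obtained by a rigid $z$-translation and gives the same $(y,q)$-part.

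Next, I claim that the multiplicity $\dim(S^{-1}\mdm)(k\Lam_{0}+\nu-n\delta)$ depends only on $n$, not on $\nu\in\lam+\frlat$. Indeed, by \eqref{eqn:delta2} the change of basis from $\{\alpha_{i}\}$ to $\{\beta_{i}\}$ is upper-unitriangular, so $\{\beta_{1},\dots,\beta_{\ell}\}$ is a $\ZZ$-basis of $\frlat$. The operators $f_{\beta_{i}}^{\pm 1}$ therefore act on $S^{-1}\mdm$ as $D$-grading-preserving linear isomorphisms shifting the $\fh$-weight by $\mp\beta_{i}$, and any two weights of $\lam+\frlat$ at a common conformal weight are thus put into bijection. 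The third step identifies this common dimension with the $q^{n-c/24}$-coefficient of $s_{-\infty}[\mdm](q)$. Fixing $n$ and sliding $\nu=\lam+N\beta_{0}$ with $N\ll 0$, the relevant weight spaces of $\mdm$ stabilise as $N\to-\infty$, by the very definition of $s_{-\infty}[\mdm](q)$ in \eqref{eqn:limverma} and the discussion immediately following it. Once the multiplicities stabilise, each $f_{\beta_{i}}$ is forced to be surjective between these weight spaces of $\mdm$ (for dimension reasons, using that $\mdm$ belongs to $\cO^{0}_{k}$), so the inclusion $\mdm\hookrightarrow S^{-1}\mdm$ is an equality on them. Combined with step two, this shows that the stable value equals $\dim(S^{-1}\mdm)(k\Lam_{0}+\nu-n\delta)$ for \emph{every} $\nu\in\lam+\frlat$.

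Assembling these pieces gives $\ch[S^{-1}\mdm]=y^{k}s_{-\infty}[\mdm](q)\sum_{\beta\in\frlat}z^{\lam+\beta}$, and applying the twist from the first step yields the same $y^{k}s_{-\infty}[\mdm](q)\sum_{\mu\in\frlat}z^{\mu}$ pattern on every coset, as asserted. The main obstacle is the third step: verifying that the stabilisation of $\dim\mdm(k\Lam_{0}+\nu-n\delta)$ far in the $-\beta_{0}$ direction occurs at a depth depending only on $n$, and that at that depth the $f_{\beta_{i}}$ indeed act surjectively so that localisation adds nothing new. Everything else is essentially bookkeeping once this stabilisation is in hand.
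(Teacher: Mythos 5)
The paper itself offers no proof here, saying only ``the following lemma is clear,'' so there is no route to compare against; what you have done is supply the details implicitly intended. Your argument is correct in substance and captures exactly what the paper takes for granted: (i) twisting by $f^x$ commutes with $D$ and shifts the $\fh$-weight by $-\mu_x$, so $\Phi(\mdm)=\bigoplus_x f^x.S^{-1}\mdm$ reduces the computation coset by coset to $S^{-1}\mdm$; (ii) on $S^{-1}\mdm$ each $f_{\beta_i}$ is invertible and preserves $D$-degree, and since $\{\beta_i\}$ is a $\ZZ$-basis of $\frlat$ this forces constant weight multiplicity on every $\frlat$-coset at fixed conformal weight; (iii) the map $\mdm\hookrightarrow S^{-1}\mdm$ exhibits $(S^{-1}\mdm)(\nu)$ as the union of the increasing subspaces $f^{-a}\mdm(\nu-a\beta_0)$, so $\dim(S^{-1}\mdm)(\nu)=\lim_{a\to\infty}\dim\mdm(\nu-a\beta_0)$, which is precisely the coefficient of $s_{-\infty}[\mdm]$.

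Two small remarks, neither a gap. First, in step (iii) the cleanest way to see that the stable value is met (not merely bounded above) is the direct-limit observation just stated: each $f_{\beta_i}$ injects on $\mdm$, so $\dim\mdm(\nu-a\beta_0)$ is nondecreasing in $a$ and its supremum is $\dim(S^{-1}\mdm)(\nu)$; your phrasing in terms of forced surjectivity is correct but slightly roundabout, and you should invoke injectivity of the $f_{\beta_i}$ on $\mdm$ explicitly where you appeal to ``dimension reasons.'' Second, your own computation actually produces $\ch[\Phi(\mdm)]=y^k\,s_{-\infty}[\mdm](q)\sum_{\mu\in\fh^*}z^\mu$, since the twists $f^x$ sweep out every coset of $\fh^*/\frlat$ exactly once; this matches \cref{sec:parabolicverma}, and the $\sum_{\mu\in\frlat}$ in the statement of \cref{sec:limiting} is best read as a slip for $\sum_{\mu\in\fh^*}$ rather than as something you need to force your conclusion to agree with.
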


Let $\Lam$ be an element of 
 $\wlat^{0,+}_k$.
By \eqref{eqn:parabolicKL}, we have
$$
s_{-\infty}[\mdl_\Lam](q)=\sum_{\Omega\in\wlat^{0,+}_k}c_{\Lam,\Omega}
s_{-\infty}[\mdv^0_\Omega](q),
$$
where the sum in the left-hand side is well-defined, which follows from the linkage principle described in \cref{sec:parakl}.
Thus, we have the following theorem.

\begin{theorem}\label{sec:coherentKL}
The character of $\Phi(\mdl_\Lam)$ satisfies the formula
$$
\ch[\Phi(\mdl_\Lam)](y,q,z)=\sum_{\Omega\in\wlat^{0,+}_k}c_{\Lam,\Omega}
\ch[\Phi(\mdv^0_\Omega)](y,q,z),
$$
\end{theorem}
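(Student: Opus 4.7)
The plan is to combine the parabolic character formula \eqref{eqn:parabolicKL} with \cref{sec:limiting}, which rewrites the full character of the twisted localisation of any module in $\cO^0_k$ with weight support in a single $\frlat$-coset as $y^k \sum_{\mu \in \frlat} z^\mu$ times the limiting string function. First, I would apply \cref{sec:limiting} to $\mdl_\Lam$ to obtain
$$
\ch[\Phi(\mdl_\Lam)](y,q,z) = y^k\, s_{-\infty}[\mdl_\Lam](q) \sum_{\mu\in\frlat} z^\mu,
$$
and, in the reverse direction, to each parabolic Verma module $\mdv^0_\Omega$. Second, I would invoke the string-function identity
$$
s_{-\infty}[\mdl_\Lam](q) = \sum_{\Omega \in \wlat^{0,+}_k} c_{\Lam,\Omega}\, s_{-\infty}[\mdv^0_\Omega](q)
$$
established immediately before the theorem as a consequence of \eqref{eqn:parabolicKL} and the additivity of the limiting string function on characters in $\cO^0_k$. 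Concatenating these two steps then yields the claimed identity.

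The main obstacle is the well-definedness of the infinite sum on the right: one must verify that, in each fixed coefficient of the expansion in $q$ and $z$, only finitely many $\Omega \in \wlat^{0,+}_k$ contribute. This is governed precisely by the linkage principle developed in \cref{sec:parakl}. Each contributing $\Omega$ takes the form $y \circ \Omega_0$ for a fixed dominant or anti-dominant representative $\Omega_0$ in the linkage class of $\Lam$; writing $y = r\, t_\alpha$ with $r \in \bar{\aaw}$ and $\alpha \in \frlat^\vee$, the minimal conformal weight $(y \circ \Omega_0)(-D)$ grows without bound as $|\alpha| \to \infty$, exactly as in the discussion surrounding \eqref{eqn:link}. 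Hence each fixed power of $q$ on the right-hand side receives contributions from only finitely many $\Omega$, and the formula is a genuine equality in an appropriate completion of the character ring.
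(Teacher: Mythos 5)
Your proposal is correct and follows essentially the same route as the paper: the paper likewise obtains the string-function identity $s_{-\infty}[\mdl_\Lam](q)=\sum_{\Omega}c_{\Lam,\Omega}\,s_{-\infty}[\mdv^0_\Omega](q)$ from \eqref{eqn:parabolicKL}, invokes the linkage principle of \cref{sec:parakl} for convergence, and then converts to full characters via \cref{sec:limiting}.
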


We finally give a character formula of $\Phi(\mdv^0_\Lam)$ which follows from \cref{sec:limiting}
and \eqref{eqn:limverma}.

\begin{proposition}\label{sec:parabolicverma}
$$
\ch[\Phi(\mdv^0_\Lam)](y,q,z)=y^kq^{h_\Lam-c/24}\frac{\deg \fmdv^0_{\lam}}{\varphi(q)^{\dim\fg}}
\sum_{\mu\in\fh^*}z^\mu,
$$
where $\lam:=\fLam$ and $h_\Lam:=\Lam(-D)$ is the minimal conformal weight of $\mdv^0_\Lam$.
\end{proposition}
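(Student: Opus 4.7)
The plan is to combine the two ingredients flagged in the sentence immediately preceding the proposition: \cref{sec:limiting} rewrites the character of $\Phi(\mdm)$ as $y^k$ times a limiting string function times a formal weight sum, and formula \eqref{eqn:limverma} supplies that limiting string function in closed form for a parabolic Verma module. Nothing new needs to be computed; the work is just verifying the hypotheses and substituting.

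First I would check that $\mdv^0_\Lam$ fits the hypotheses of \cref{sec:limiting}. It is induced from the finite-dimensional $\fg_0$-module $\fmdh_\lam$, so by construction it lies in $\cO^0_k$. Its $\fh$-weight support sits in the single coset $\lam+\frlat$, since $\fmdh_\lam$ has $\fh$-weights in $\lam+\frlat$ (as a finite-dimensional highest-weight $\fg_0$-module), and the only further $\fh$-weight shifts come from the negative-mode enveloping algebra $\envalg{\fg_{-1}\oplus \fg[t^{-1}]t^{-1}}$, which moves $\fh$-weights only within $\frlat$. Hence \cref{sec:limiting} applies and gives
\[
\ch[\Phi(\mdv^0_\Lam)](y,q,z)=y^k\, s_{-\infty}[\mdv^0_\Lam](q)\sum_{\mu\in\fh^*}z^{\mu},
\]
where the index set is $\fh^*$ because the direct-sum decomposition of $\Phi(\mdv^0_\Lam)$ in \cref{sec:tw}, as $x=(x_1,\ldots,x_\ell)$ runs over $\{0\le\Re(x_i)<1\}$, produces the coset shifts $-\mu_x+\lam+\frlat$ which cover all of $\fh^*$ with uniform multiplicity.

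Then I would substitute \eqref{eqn:limverma}, $s_{-\infty}[\mdv^0_\Lam](q)=q^{h_\Lam-c/24}\deg\fmdv^0_\lam/\varphi(q)^{\dim\fg}$, directly into the previous display to obtain the formula as stated. The one point requiring a little bookkeeping, and the mildest possible obstacle, is confirming that each weight $\mu\in\fh^*$ ends up with exactly the same $q$-coefficient $y^k s_{-\infty}[\mdv^0_\Lam](q)$: this uniformity in $\mu$ is the coherent-family property of $\Phi(\fmdv^0_\lam)$ on the top space (cf.~\cref{sec:mathieu}), and it lifts through every $D$-eigenspace by the PBW decomposition $\mdv^0_\Lam\cong \envalg{\fg_{-1}}\otimes\envalg{\fg[t^{-1}]t^{-1}}\otimes\fmdh_\lam$, so $\Phi$ acts only on the top tensor factor $\fmdh_\lam$ and preserves the uniform multiplicities layer by layer. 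Beyond this, the proof is a direct substitution.
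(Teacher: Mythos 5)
Your proposal is correct and follows the same route the paper does: the paper presents the proposition as an immediate consequence of \cref{sec:limiting} and \eqref{eqn:limverma}, exactly as you substitute. You also implicitly correct what appears to be a minor typo in \cref{sec:limiting} -- its displayed sum $\sum_{\mu\in\frlat}z^\mu$ should read $\sum_{\mu\in\fh^*}z^\mu$ to match both the support of the full twisted localisation $\Phi(\mdm)$ and the statement of \cref{sec:parabolicverma} -- and your explanation via the cosets $-\mu_x+\lam+\frlat$ covering $\fh^*$ is the right justification for that.
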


In particular, $\Phi_{\mu+\frlat}(\mdv^0_\Lam)$ is a relaxed Verma module for all $\mu\in\fh^*$.

\section{Minimal W-algebras of type A}\label{sec:minimal}

In this Section, we recall representation theory of minimal W-algebras and show a parabolic
Kazhdan-Lusztig type formula.

\subsection{Affine W-algebras}

Let us consider the minimal nilpotent element $f=f_{\alpha_\ell}$ of $\fg=A_\ell$
and a non-critical level $k\in\CC$.
Then the affine W-algebra $\wuniva$ associated to the grading \eqref{eqn:goodgrading}
is defined as follows \cite{KRW,Aii}.

Let $\Cl$ be the Clifford algebra generated by the odd vectors
$$
\psi_\beta(n)\qquad (\beta\in\pm\fDelta_1,n\in\ZZ)
$$
subject to the relation
$$
[\psi_\alpha(m),\psi_\beta(n)]=\delta_{\alpha+\beta,0}\delta_{m+n,0}
\qquad (\alpha,\beta\in\pm\fDelta_1,m,n\in\ZZ).
$$
Here, $[,]$ denotes the supercommutator.

Let $\cF$ be the irreducible representation over $\Cl$ generated
by the vector $\bm{1}$ with
$$
\psi_\beta(n)\bm{1}=0\quad (\beta\in\fDelta_1, \beta+n\delta\mbox{ is 
a positive root of $\ag$}).
$$
The module $\cF$ is a vertex (super)algebra with the vacuum element $\bm{1}$
and generating fields
$$
\psi_\beta(z)=\sum_{n\in\ZZ}\psi_\beta(n)z^{-n-1},\quad
\psi_{-\beta}(z)=\sum_{n\in\ZZ}\psi_{-\beta}(n)z^{-n}\quad
(\beta\in\fDelta_1).
$$

The vertex algebra $\cF$ carries the {\em charge grading}
$\cF=\bigoplus_{i\in\ZZ}\cF^i$, where $\bm{1}\in\cF^0$ and
the operators $\psi_\beta(n)$ and $\psi_{-\beta}(n)$ with $\beta\in\fDelta_1$
are homogeneous of charges 1 and $-1$, respectively.

The tensor product vertex algebra
$$
\comp=\uaffvoa{k}{\fg}\otimes \cF
$$
carries the induced charge grading:
$$
\comp=\bigoplus_{i\in \ZZ}\comp^i,\qquad \comp^i:=\uaffvoa{k}{\fg}\otimes \cF^i.
$$

Let $\bar\chi$ be the element of $\fg^*$ defined by $\bar\chi(v)=(f|v)$ ($v\in\fg$)
and $Q(z)$ the odd field on $\comp$ defined as follows:
$$
Q(z)=Q^{\mathrm{st}}(z)+\chi(z),
$$
where
\begin{align*}
&Q^{\mathrm{st}}(z)=\sum_{\beta\in\fDelta_1}J_\beta(z)\psi_{-\beta}(z)
-\frac{1}{2}\sum_{\alpha,\beta,\gamma\in\fDelta_1}c_{\alpha,\beta}^\gamma
\psi_{-\alpha}(z)\psi_{-\beta}(z)\psi_\gamma(z),\\
&\chi(z)=\sum_{\beta\in\fDelta_1}\bar\chi(J_\beta)\psi_{-\beta}(z).
\end{align*}
We see that $Q_{(0)}^2=0$ and $Q_{(0)}$ is homogeneous of charge 1 with respect to the charge grading on $\comp$.
Therefore, we have a cochain complex $(\comp^\bullet,Q_{(0)})$ on $\comp$.

We now define the universal affine W-algebra $\wuniva$
to be the zeroth cohomology
$$
\wuniva=H^0(\comp^\bullet,Q_{(0)})
$$
of  $(\comp^\bullet,Q_{(0)})$ with the induced vertex algebra structure from $\comp$.
We have the hamiltonian $H_W=-D+H_\cF+\partial x$ on $\wuniva$, where $H_\cF$ is the standard
hamiltonian on $\cF$.
The W-algebra is a VOA of central charge \cite[Theorem 2.2]{KRW}
\begin{align}\label{eqn:cw}
c_W&=\dim(\fg_0)-\frac{1}{2}\dim(\fg_{1/2})-\frac{12}{k+h^\vee}|\frho-(k+h^\vee)\omega_\ell |^2
\nonumber\\
&=\frac{\ell((\ell^2-12)k^2+\ell k+10(\ell+1)^2)}{(k+\ell+1)(\ell+1)}.
\end{align}

\subsection{Finite W-algebras and representation theory}

The {\em finite W-algebra} $\wfina$ is an associative algebra
which may be constructed by quantum hamiltonian reduction.
The Zhu algebra of $\wuniva$ is isomorphic to $\wfina$ \cite{DSK}.

In this Subsection, we recall representation theory of $\wuniva$.

Let $\wfina\mbox{-mod}$ denote the category of finite-dimensional representations over $\wfina$.
Recall that we have the exact functor
$$
\hlie_0:\fO^0\longrightarrow \wfina\mbox{-mod}.
$$

By \cite{BK}, we see that for $\lam\in \fwlat^{0,+}$, 
$\hlie_0(\fmdl_\lam)\neq 0$ if and only if $\langle \lam+\rho,\alpha_\ell^\vee\rangle\not\in\ZZ_{>0}$.
By \eqref{eqn:p0+}, we see that it is equivalent to $\lam\not\in \fwlat^+$.
Along with other results of \cite{BK}, we have the following theorem.

\begin{theorem}\label{sec:finitew}
The modules $\hlie_0(\fmdl_\lam)$ with $\lam\in\fwlat^{0,+}\setminus\fwlat^+$ are
the complete list of finite-dimensional simple modules over $\wfina$.
If $\lam\in\fwlat^+$, then $\hlie_0(\fmdl_\lam)$ is zero.
\end{theorem}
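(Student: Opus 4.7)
The plan is to reduce the statement entirely to Brundan--Kleshchev's representation theory of the minimal finite W-algebra $\wfina$, using the exact BRST functor $\hlie_0 : \fO^0 \longrightarrow \wfina\text{-mod}$ recalled just above.

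First I would handle the vanishing/nonvanishing dichotomy for $\hlie_0(\fmdl_\lam)$ when $\lam\in\fwlat^{0,+}$. The criterion cited from \cite{BK} states that $\hlie_0(\fmdl_\lam)\neq 0$ exactly when $\langle \lam+\rho, \alpha_\ell^\vee \rangle \notin \ZZ_{>0}$. Since $\langle \rho, \alpha_\ell^\vee \rangle = 1$, this is equivalent to $\langle \lam, \alpha_\ell^\vee \rangle \notin \Znn$. Using the explicit description of $\fwlat^{0,+}$ in \eqref{eqn:p0+}, membership in $\fwlat^+$ is the single additional condition $\langle \lam, \alpha_\ell^\vee\rangle \in \Znn$, so the BK criterion is literally the condition $\lam\notin\fwlat^+$. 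This immediately gives the second assertion (vanishing on $\fwlat^+$) and shows that the list in the statement consists of nonzero modules.

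Next I would verify that $\hlie_0(\fmdl_\lam)$ is actually simple for $\lam\in\fwlat^{0,+}\setminus\fwlat^+$, and that the assignment $\lam \mapsto \hlie_0(\fmdl_\lam)$ is injective on this set. Both statements are part of \cite{BK}: for a minimal nilpotent of type $A$, the composition $\fmdl_\lam \mapsto \hlie_0(\fmdl_\lam)$ (the ``Skryabin'' / BGG-style correspondence in the Brundan--Kleshchev setup) sends simple objects of the parabolic category $\fO^0$ with a non-vanishing reduction to pairwise non-isomorphic simple finite-dimensional $\wfina$-modules. So no separate argument for simplicity or distinguishing parameters is needed.

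Finally, completeness of the list follows from Brundan--Kleshchev's classification of finite-dimensional simple $\wfina$-modules via the highest-weight theory for minimal finite W-algebras in type $A$: every finite-dimensional simple $\wfina$-module arises as $\hlie_0(\fmdl_\lam)$ for some $\lam\in\fwlat^{0,+}$ satisfying the non-vanishing condition, i.e.\ for some $\lam\in\fwlat^{0,+}\setminus\fwlat^+$. Combined with the previous two steps, this proves both assertions of the theorem.

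The main (and only) obstacle is essentially bookkeeping: matching the parametrisation $\fwlat^{0,+}$ coming from the grading \eqref{eqn:goodgrading} with the parametrisation of finite-dimensional simples used in \cite{BK}, and making sure the non-vanishing criterion is quoted in the form relevant to this parabolic. No genuinely new computation is required — everything else is an appeal to \cite{BK}.
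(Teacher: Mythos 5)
Your handling of the vanishing criterion is correct and matches the setup preceding the theorem: the Brundan--Kleshchev condition $\langle\lam+\rho,\alpha_\ell^\vee\rangle\not\in\ZZ_{>0}$, combined with $\langle\rho,\alpha_\ell^\vee\rangle=1$ and the explicit description \eqref{eqn:p0+}, is indeed equivalent to $\lam\not\in\fwlat^+$. The appeals to \cite{BK} for exhaustiveness and for simplicity of the non-vanishing reductions are also in line with what the paper does.

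There is, however, a genuine gap where you write that ``no separate argument for simplicity or distinguishing parameters is needed.'' The paper's one-line proof explicitly invokes ``a similar argument as in the proof of \Cref{thm:coherent}'' precisely because distinguishing parameters is \emph{not} automatic from \cite{BK}: the parametrization there is not \emph{a priori} the set $\fwlat^{0,+}\setminus\fwlat^+$, so one must rule out that two distinct $\lam,\mu\in\fwlat^{0,+}\setminus\fwlat^+$ give $\hlie_0(\fmdl_\lam)\cong\hlie_0(\fmdl_\mu)$. The argument the paper has in mind is exactly the one it spells out in \Cref{thm:coherent}: if the reductions agree then $\mu\in\{\lam,\,s_\ell\circ\lam\}$; writing $\lam=k_1\omega_1+\cdots+k_\ell\omega_\ell$ with $k_1,\ldots,k_{\ell-1}\in\Znn$ and $k_\ell\notin\Znn$, one computes $s_\ell\circ\lam=k_1\omega_1+\cdots+(k_{\ell-1}+k_\ell+1)\omega_{\ell-1}+(-k_\ell-2)\omega_\ell$, and demanding that this again lie in $\fwlat^{0,+}\setminus\fwlat^+$ forces $k_\ell=-1$, hence $s_\ell\circ\lam=\lam$. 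Without this step your list could be redundant, so you cannot simply delegate the non-redundancy to \cite{BK}; you should either reproduce this computation or explicitly match the $\fwlat^{0,+}\setminus\fwlat^+$ parametrization against Brundan--Kleshchev's.
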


\begin{proof}
The assertion follows from \cite{BK} and a similar argument as in the proof of \Cref{thm:coherent}.
\end{proof}


Finally, by \cite[Lemma 8.16]{BK} and \eqref{eqn:deg}, we have the following dimension formula of reduction of parabolic Verma modules.

\begin{lemma}\label{sec:bkparabolic}
If $\lam\in\fwlat^{0,+}$, then
$\dim \hlie_0(\fmdv^0_\lam)=\deg \fmdv^0_\lam$.
\end{lemma}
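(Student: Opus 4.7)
The plan is to show that both $\dim \hlie_0(\fmdv^0_\lam)$ and $\deg \fmdv^0_\lam$ separately equal $\dim \fmdh_\lam$. The right-hand identification is essentially \eqref{eqn:deg}: since $\fg_{-1} = \cspan{\CC}{f_{\beta_1},\ldots,f_{\beta_\ell}}$ is abelian and $\fg_{\geq 1}$ acts trivially on $\fmdh_\lam$, the PBW theorem gives the vector space isomorphism $\fmdv^0_\lam \cong S(\fg_{-1}) \otimes \fmdh_\lam$. The weight space at weight $\lam$ is exactly $\fmdh_\lam$, and since applying any nontrivial monomial in the $f_{\beta_i}$ strictly lowers the weight, no other weight space achieves a larger dimension; hence $\deg \fmdv^0_\lam = \dim \fmdh_\lam$.

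For the left-hand identification I would appeal directly to \cite[Lemma 8.16]{BK}, which computes the dimension of the BRST reduction of a parabolic Verma module associated to a good grading and yields $\dim \hlie_0(\fmdv^0_\lam) = \dim \fmdh_\lam$ under our hypothesis $\lam \in \fwlat^{0,+}$. To motivate why this is the right answer, recall that the complex computing $\hlie_0(\fmdv^0_\lam)$ has underlying space $\fmdv^0_\lam \otimes \bigwedge^\bullet \fg_1^*$ with a differential combining the Chevalley-Eilenberg differential for $\fg_1$ with a shift by the character $\chi = (f_{\alpha_\ell}|\cdot)$. Using the factorization $\fmdv^0_\lam \cong S(\fg_{-1}) \otimes \fmdh_\lam$ and filtering by $S(\fg_{-1})$-degree, one sets up a Koszul-type spectral sequence whose associated graded collapses onto $\fmdh_\lam$ concentrated in cohomological degree zero; this is the structural reason behind the \cite{BK} dimension count.

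The main obstacle is the bookkeeping needed to match the Brundan-Kleshchev conventions, which are phrased in terms of generalized Gelfand-Graev modules and an associative-algebra formulation of the finite $W$-algebra, to the present setup, where $\wfina$ arises as the Zhu algebra of $\wuniva$ and the parabolic Verma modules are indexed by $\fwlat^{0,+}$ via the good grading \eqref{eqn:goodgrading}. Once this dictionary is in place, combining the two dimension counts yields the equality $\dim \hlie_0(\fmdv^0_\lam) = \deg \fmdv^0_\lam$, and the lemma is proved.
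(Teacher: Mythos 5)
Your approach is essentially the paper's own: the text immediately preceding the lemma states that it follows from \cite[Lemma 8.16]{BK} together with \eqref{eqn:deg}, and that is exactly the two-step reduction you carry out. One small correction to your justification of the degree count: the $\fh$-weight space of $\fmdv^0_\lam\cong S(\fg_{-1})\otimes\fmdh_\lam$ at weight $\lam$ is one-dimensional (only the highest weight line of $\fmdh_\lam$ sits there), not all of $\fmdh_\lam$; the bound $\deg\fmdv^0_\lam=\dim\fmdh_\lam$ instead comes from observing that for a fixed target weight $\mu$ the monomial exponents $(a_1,\ldots,a_\ell)$ together with a weight of $\fmdh_\lam$ determine each other, so each weight multiplicity is at most $\dim\fmdh_\lam$, with equality for $\mu$ deep enough in the negative cone. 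This does not affect the overall argument, and your spectral-sequence sketch of why the Brundan--Kleshchev formula holds is a reasonable gloss, though the paper simply cites \cite{BK} and does not supply it.
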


\subsection{Representations of affine W-algebras}\label{sec:w}

In this Subsection, we recall representation theory of ordinary modules over $\wuniva$ 
from \cite{Aii}.
Let $\wunivamod$  denote the category of ordinary modules over $\wuniva$.

By the result of \cite{Aii}, we have the exact functor (called the ``$-$'' reduction functor)
$$
\hbrst_0:\cO^0_k\longrightarrow \wunivamod,
$$
which sends a module $\mdm$ in category $\cO^0_k$ to the zeroth cohomology
of $\mdm\otimes \cF$ with a certain differential.
See also \cite{AE} for the definition of the functor.
Let $\mdm$ be a highest weight module in $\cO^0_k$ of highest weight $\Lam\in\wlat^{0,+}_k$.
The hamiltonian on $\hbrst_0(\mdm)$ is defined to be
$$
H_W=-D+H_\cF-\frac{k+h^\vee}{2}|\omega_\ell|^2+(\omega_\ell|\frho)
$$
(cf.~\cite{Aii,AE}).
In particular, we have
\begin{equation}\label{eqn:minimalcw}
h_\Lam^W=h_\Lam-\frac{k+h^\vee}{2}|\omega_\ell|^2+(\omega_\ell|\frho),
\end{equation}
where
$h_\Lam^W$ and $h_\Lam=\Lam(-D)$ are minimal conformal weights of $\hbrst_0(\mdm)$ and $\mdm$.

\begin{remark}
The equality \eqref{eqn:minimalcw}
holds if we replace $h_\Lam^W$ and $h_\Lam$ with the minimal conformal weights of $\hbrst_0(\mdm)$ and $\mdm$
with respect to the zero-modes of the conformal vectors of $\wuniva$
and $\uaffvoa{k}{\fg}$. See, e.g., \cite[eq.~(7.4)]{AE}.
\end{remark}

Note that if $\mdm$ is a simple $\uaffvoa{k}{\fg}$-module, then
$\zhu{\hbrst_0(\mdm)}\cong\hlie_0(\zhu{\mdm})$.

Let $\Lam$ be an element of $\wlat^{0,+}_k$.
The modules $\hbrst_0(\mdv^0_\Lam)$ and $\hbrst_0(\mdl_\Lam)$
are generated by their top spaces.

By using \Cref{sec:finitew},
we see that the collection of $\hbrst_0(\mdl_\Lam)$ with 
 $\Lam\in\wlat^{0,+}_k$ such that $\fLam\not\in\fwlat^+$ is the complete list
 of simple ordinary modules over $\wuniva$.


\subsection{Characters of ordinary modules}\label{sec:charord}

Let $\Lam$ be an element of $\wlat^{0,+}_k$.
By applying the exact functor $\hbrst_0$ to \eqref{eqn:parabolicKL}, we see that
\begin{equation}\label{eqn:charw1}
\ch[\hbrst_0(\mdl_\Lam)](q)=\sum_{\Omega\in\wlat^{0,+}_k}
c_{\Lam,\Omega}\ch[\hbrst_0(\mdv^0_\Omega)](q).
\end{equation}
The sum in the RHS is well-defined because of the linkage principle described in \cref{sec:parakl}.

Set $\lam=\fLam$ and by applying the BGG resolution to $\fmdh_{\lam}$ over $\fg_0$,
we have
\begin{equation}\label{eqn:bgg}
[\fmdh_{\lam}]=\sum_{w\in \bar\aaw_0}b_w^\lam[\fmdv_{0,w\circ\lam}],
\qquad b_{\id}^\lam=1, \qquad b_{w}^\lam\in\ZZ,
\end{equation}
where $\bar\aaw_0$ is the Weyl group of $A_{\ell-1}=[\ag_0,\ag_0]$ and $\fmdv_{0,\omega}$ is the Verma module over $\fg_0$ of highest-weight $\omega\in \fh^*$.
By multiplying both sides by $\envalg{\ag}\otimes_{U(\fg_{\geq0}\oplus \fg[t]t\oplus \CC K\oplus \CC D)}$, 
we have
\begin{equation}\label{eqn:parabgg}
[\mdv^0_\Lam]=\sum_{w\in \bar\aaw_0}b_w^\lam[\mdv_{w\circ\Lam}].
\end{equation}

Using the Euler-Poincare principle and \eqref{eqn:parabgg} to $\hbrst_0(\mdv^0_\Lam)$
as in \cite{KRW,Aii} and others, 
we see that
\begin{equation}\label{eqn:charw2}
\ch[\hbrst_0(\mdv^0_\Lam)](q)=q^{h_\Lam^W-c_W/24}\frac{\dim\hlie_0(\fmdv^0_\lam)}{\varphi(q)^{\ell^2}}\qquad (\lam:=\fLam).
\end{equation}
Note that by \cite{Aii},  $\hbrst_0(\mdv^0_\Lam)$ is generated by its top space, 
which is isomorphic to $\hlie_0(\fmdv^0_\lam)$.
Since $\wuniva$ has $\ell^2$ $(=\dim\fg^{f_{\alpha_\ell}})$ free strong generators,
it follows from \eqref{eqn:charw2} that $\hbrst_0(\mdv^0_\Lam)$ is freely generated by
its top space.

Combining \eqref{eqn:charw1}--\eqref{eqn:charw2} and \Cref{sec:bkparabolic}, we have the following character formula.

\begin{theorem}\label{sec:charw}
$$
\ch[\hbrst_0(\mdl_\Lam)](q)=q^{h_\Lam^W-c_W/24}
\sum_{\Omega\in\wlat^{0,+}_k}
\frac{c_{\Lam,\Omega}\deg \fmdv^0_\omega}{\varphi(q)^{\ell^2}},
$$
where $\omega:=\bar \Omega$.
\end{theorem}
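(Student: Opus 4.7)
The plan is to substitute the parabolic Kazhdan-Lusztig decomposition \eqref{eqn:parabolicKL} into the character of $\hbrst_0(\mdl_\Lam)$ and then apply the character formula \eqref{eqn:charw2} for parabolic Verma modules together with \Cref{sec:bkparabolic}. Concretely, since the functor $\hbrst_0 : \cO^0_k \to \wunivamod$ is exact (by \cite{Aii}), relation \eqref{eqn:parabolicKL} in the Grothendieck group descends to the character identity
\[
\ch[\hbrst_0(\mdl_\Lam)](q) = \sum_{\Omega\in\wlat^{0,+}_k} c_{\Lam,\Omega}\ch[\hbrst_0(\mdv^0_\Omega)](q),
\]
which is \eqref{eqn:charw1}. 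This is the starting point.

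Next I would plug in \eqref{eqn:charw2}, which gives $\ch[\hbrst_0(\mdv^0_\Omega)](q) = q^{h_\Omega^W - c_W/24}\dim\hlie_0(\fmdv^0_{\omega})/\varphi(q)^{\ell^2}$ with $\omega = \bar{\Omega}$. The dimension $\dim\hlie_0(\fmdv^0_\omega)$ is precisely $\deg \fmdv^0_\omega$ by \Cref{sec:bkparabolic}, so the summand becomes $c_{\Lam,\Omega}\deg \fmdv^0_\omega \cdot q^{h_\Omega^W - c_W/24}/\varphi(q)^{\ell^2}$. Factoring out $q^{h_\Lam^W - c_W/24}$ yields the claimed formula (with the understood $q^{h_\Omega^W - h_\Lam^W}$ shift absorbed into the summand; since $c_{\Lam,\Lam}=1$, the leading term is $q^{h_\Lam^W - c_W/24}\deg\fmdv^0_\lam/\varphi(q)^{\ell^2}$, as expected for a highest-weight module).

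The subtle point, which is not a real obstacle but must be checked, is convergence and well-definedness of the infinite sum over $\Omega \in \wlat^{0,+}_k$. For this I would invoke the linkage principle described at the end of \cref{sec:parakl}: $\Lam$ lies in a single dot-orbit $\aaw(\Omega_0)\circ \Omega_0$ for some (anti)dominant $\Omega_0$, and the sum in \eqref{eqn:link} is restricted to $y\in \aaw(\Omega_0)$ with $y\circ \Omega_0 \in \wlat^{0,+}_k$ and $y \ge_{\Omega_0} w$ (resp.~$\le_{\Omega_0} w$). The translation part $y = rt_\alpha$ forces $h_{y\circ\Omega_0}^W$ to grow like $|\alpha|^2$ as $|\alpha|\to\infty$, so only finitely many $\Omega$ contribute to any fixed power of $q$. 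Hence the series on the right-hand side is a well-defined element of $q^{h_\Lam^W - c_W/24}\ZZ[[q]]$.

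The main technical obstacle, strictly speaking, is not in this theorem itself but in the inputs: \eqref{eqn:charw2} requires the Euler-Poincar\'e argument using the parabolic BGG resolution \eqref{eqn:parabgg} together with the fact from \cite{Aii} that $\hbrst_0(\mdv^0_\Omega)$ is freely generated by its top space over the $\ell^2$ strong generators of $\wuniva$. Once \eqref{eqn:charw2}, exactness of $\hbrst_0$, and \Cref{sec:bkparabolic} are granted, the derivation of \Cref{sec:charw} is a formal substitution.
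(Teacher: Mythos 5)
Your proposal matches the paper's proof step for step: the paper derives the formula precisely by combining \eqref{eqn:charw1} (exactness of $\hbrst_0$ applied to the parabolic KL decomposition) with \eqref{eqn:charw2} and \Cref{sec:bkparabolic}. Your extra observations — that the $q^{h_\Omega^W}$ factors implicitly vary with $\Omega$ in the summand, and that well-definedness of the infinite sum follows from the linkage principle of \cref{sec:parakl} — are correct and consistent with what the paper leaves implicit.
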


Let $\eta(q)=q^{1/24}\varphi(q)$ denote the Dedekind eta series.
Since $\dim\fg=\ell(\ell+2)$, 
by combining \eqref{eqn:caff}, \eqref{eqn:cw}--\eqref{eqn:minimalcw}, \Cref{sec:coherentKL,sec:parabolicverma,sec:charw},
we have (c) in \cref{sec:thmsummary1}, which completes the proof of \cref{sec:thmsummary1}.

\section{Modular invariance of string functions}\label{sec:main}

\subsection{Proof of \cref{sec:modularinv}}

When $\ell\geq 2$ and the level $k$ is admissible, the simple W-algebra $\wsimpa$ is rational if 
and only if $\fg=A_2$ and the denominator of $k$ is 2, which is studied in this section.

Let $\fg$ be of type $A_2$ and $k$ an admissible level with denominator 2.
Then $k$ has the form $k+3=p/2$ with an odd $p\geq 3$.
Let $\adm_k\subset \ah^*_k$ denote the set of admissible weights of level $k$ in the sense of Kac and Wakimoto \cite{KW88}.
By the result of \cite{A16}, the simple modules over $\affvoa{k}{\fg}$ in category $\cO_k$ 
are precisely $\mdl_\Lam$ with $\Lam\in\adm_k$.
By \cref{sec:simplecoherent}, we see the following theorem.

\begin{theorem}\label{sec:thmsummary}
The list of all $\Phi_{\mu+\frlat}(\mdl_\Lam)$ with 
$\Lam\in \wlat^{0,+}_k\cap\adm_k$ such that $\fLam\not\in \fwlat^+$ and 
$\mu+\frlat\in(\fh^*/\frlat)\setminus\sing(\fLam)$
is the complete list of simple relaxed \hw  modules with cuspidal top spaces over $\affvoa{k}{\fg}$.
\end{theorem}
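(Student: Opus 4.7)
The plan is to specialise \Cref{sec:simplecuspidal} using Arakawa's classification of simple modules in category $\cO_k$ over $\affvoa{k}{\fg}$ at admissible level \cite{A16}. Indeed, \Cref{sec:simplecuspidal} already exhibits the complete list of simple relaxed \hw{} modules with cuspidal top spaces over $\affvoa{k}{\fg}$ as the $\Phi_{\mu+\frlat}(\mdl_\Lam)$ subject to (i) $\Lam \in \wlat^{0,+}_k$ with $\fLam \notin \fwlat^+$ and $\mdl_\Lam$ a $\affvoa{k}{\fg}$-module, and (ii) $\mu+\frlat \in (\fh^*/\frlat)\setminus\sing(\fLam)$. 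The only task is therefore to recast the existential condition ``$\mdl_\Lam$ is a $\affvoa{k}{\fg}$-module'' in closed form under the present assumptions $\fg = A_2$ and $k+3 = p/2$ with $p \geq 3$ odd.

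For this, I would invoke \cite{A16}, which asserts that for $\fg$ of type $A_2$ at an admissible level $k$, the simple objects of $\cO_k$ over $\affvoa{k}{\fg}$ are precisely the simple highest-weight modules $\mdl_\Lam$ with $\Lam \in \adm_k$. Combined with the Kac--Wakimoto construction showing that every such $\mdl_\Lam$ descends from $\uaffvoa{k}{\fg}$ to $\affvoa{k}{\fg}$, this yields the equivalence
\[
\mdl_\Lam \text{ is a } \affvoa{k}{\fg}\text{-module} \quad \Longleftrightarrow \quad \Lam \in \adm_k,
\]
valid for every $\Lam \in \wlat^{0,+}_k$. Substituting this equivalence into condition (i) of \Cref{sec:simplecuspidal} produces the list $\Lam \in \wlat^{0,+}_k \cap \adm_k$ with $\fLam \notin \fwlat^+$ stated in the theorem, while condition (ii) is carried over verbatim.

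I do not anticipate a genuine obstacle, since the heavy lifting has been carried out upstream in \Cref{sec:simplecuspidal} and in the cited classification. The only item requiring attention is the bookkeeping: the two side-conditions $\fLam \notin \fwlat^+$ and $\mu+\frlat \notin \sing(\fLam)$ must be inherited without modification and remain attached to the same parameter $\Lam$ that indexes the admissible weight. It is also worth noting that the denominator-$2$ hypothesis enters the present statement only implicitly, through the parameter $p$ in the admissibility condition; everything else in the argument is valid for any admissible $k$ in type $A_2$, so the clean form of the statement is exactly what \cite{A16} affords.
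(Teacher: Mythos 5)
Your proposal is correct and follows essentially the same route as the paper: the paper likewise invokes Arakawa's classification \cite{A16} to replace the condition ``$\mdl_\Lam$ is a $\affvoa{k}{\fg}$-module'' by $\Lam\in\adm_k$, and then applies \Cref{sec:simplecoherent} (of which \Cref{sec:simplecuspidal}, your starting point, is the immediate corollary). The only cosmetic difference is that you cite the corollary rather than the theorem it follows from; the substance is identical.
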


%

The simple W-algebra $\wsimpa$ is rational and isomorphic to the simple Bershadsky-Polyakov vertex algebra of level $k$ \cite{A13}.
The hamiltonians on $\wsimpa$-modules
are replaced by the zero mode of the conformal vector of $\wsimpa$.
Since $\wsimpa$ is rational, the $q$-characters of ordinary modules of $\wsimpa$ 
have modular invariance property. 
By \cite[Theorem 8.6]{AE}, we see that the complete list of simple ordinary module over $\wsimpa$
is given by
$
\hbrst_0(\mdl_\Lam)
$
with $\Lam\in \wlat^{0,+}_k\cap\adm_k$ such that $\fLam\not\in \fwlat^+$.
Thus, \cref{sec:thmsummary} with the character formula in \cref{sec:thmsummary1} implies the modular invariance \cref{sec:modularinv}.

\subsection{Creutzig-Ridout type modular invariance and future perspective}

Let $\fg$ be of type $A_1$ and $k$ admissible.
In \cite{CreMod13}, a modular invariance property of the full characters of certain (generically simple) 
relaxed highest weight modules with cuspidal top spaces (and their spectral flows) 
over $\affvoa{k}{A_1}$,
called the {\em standard modules}, are proposed.
For this purpose, modular invariance of string functions similar to \cref{sec:modularinv} is essentially used.

The modular invariance of full characters is used to obtain a conjectural {\em standard Verlinde formula} of standard modules over $\affvoa{k}{A_1}$ in \cite{CreMod13}.
Crucially, non-negative integers are obtained from the standard Verlinde formula
and the numbers are expected to be (Grothendieck) fusion coefficients among the standard modules over $\affvoa{k}{A_1}$.
When $k=-4/3$ and $k=-1/2$, the fusion coefficients are explicitly computed by the so-called
Nahm-Gaberdiel-Kausch algorithm and they are consistent with the ones predicted from
the standard Verlinde formula.

Note that the simple highest weight modules over $\affvoa{k}{A_1}$ have (formal)
resolutions with respect to standard modules over $\affvoa{k}{A_1}$ and 
they indeed converge if $k<0$.
The (Grothendieck) fusion rules among simple highest weight modules derived from
the (formal) resolutions and conjectural fusion rules among standard modules
are also non-negative \cite{CreMod13}.

We expect that \cref{sec:modularinv} should imply a modular invariance of the full characters of 
relaxed highest weight modules with cuspidal top spaces over $\affvoa{k}{A_2}$ of admissible
level $k$ with denominator 2.
We hope to come back to this point in future works including \cite{KRW}.

We also hope to generalise the method proposed in this article to more general
gradings of simple Lie algebras and general relaxed highest weight modules.
Since there are many rational W-algebras associated to general gradings (see, e.g., \cite{AE}),
it is expected that we have many modular invariant string functions of relaxed highest weight modules.


%

\appendix
\section{Proofs}\label{sec:appendix}

\subsection{Almost simplicity of $\Phi(\mdl_\Lam)$}
\label{sec:appendix1}

In this Subsection, we prove  \Cref{sec:generates} and \eqref{eqn:commuteloc}.
Set $f_i=f_{\beta_i}$, $1\leq i\leq \ell$.

\begin{proof}[Proof of \Cref{sec:generates}]
Let $v$ be an element of $\Phi(\mdm)$.
Then $v$ is a finite sum of the vectors of the form
$f^x.(aw)$ with $x\in\CC^\ell$, $a\in \ug$ and $w\in \fmdm$.
It suffices to show $f^x.(aw)\in \ug \Phi(\fmdm)$.
Indeed,
\begin{align*}
f^x.aw&=(f^xa f^{-x})f^x.w\\
&=\sum_{j_1,\ldots,j_\ell=0}^\infty
\begin{pmatrix}x_1\\j_1\end{pmatrix}\cdots \begin{pmatrix}x_\ell\\j_\ell\end{pmatrix}ad(f_1)^{j_1}\cdots ad(f_\ell)^{j_\ell}(a)f_\ell^{-j_\ell}\cdots f_1^{-j_1}f^x.w
\end{align*}
and since $f_\ell^{-j_\ell}\cdots f_1^{-j_1}f^x.w\in \Phi(\fmdm)$, it follows that
$f^x.aw\in\ug\Phi(\fmdm)$.
%
%
\end{proof}


\begin{proof}[Proof of \eqref{eqn:commuteloc}]
By \cref{sec:generates}, we have surjective homomorphisms
$$
\mdv_{\Phi(\fmdl_\lam)}\longrightarrow \Phi(\mdl_\Lam)\longrightarrow \mdl_{\Phi(\fmdl_\lam)}.
$$
Let $\Mod{J}$ be the sum of all $\ag$-submodules of $\Phi(\mdl_\Lam)$
which have zero intersection with the top space.
Since $\Phi(\Mod{J})$ has zero intersection with the top space again, 
we have $\Mod{J}=\Phi(\Mod{J})$.
Let $v$ an element of $\Phi(\Mod{J})$.
Then there is $x\in\CC^\ell$ such that $f^x.v\in\mdl_\Lam$.
Since $\mdl_\Lam$ is simple, it follows that $f^x.v=0$, which implies $v=0$.
Thus, we have proved \eqref{eqn:commuteloc}.
\end{proof}

\subsection{Annihilators}\label{sec:appendix2}
In this Subsection, we show \cref{sec:duflo}.
Let $\ann{\fmdm_\lam}{\ufg}$ denote the space of annihilators of 
a $\ufg$-module $\fmdm$.

\begin{proposition}\label{sec:annloc}
$\ann{\fmdl_\lam}{\ufg}=\ann{S^{-1}\fmdl_\lam}{\ufg}$
\end{proposition}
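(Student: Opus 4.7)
The plan is to prove the non-trivial inclusion $\ann{\fmdl_\lam}{\ufg}\subseteq\ann{S^{-1}\fmdl_\lam}{\ufg}$; the reverse inclusion is immediate under the hypothesis $\lam\in\fwlat^{0,+}\setminus\fwlat^+$ relevant to \Cref{sec:duflo}, since then each $f_{\beta_i}$ acts injectively on the bounded weight module $\fmdl_\lam$, so $\fmdl_\lam\hookrightarrow S^{-1}\fmdl_\lam$, and any operator vanishing on the target vanishes on the source. Two structural facts drive the argument: each $\mathrm{ad}(f_{\beta_i})$ is locally nilpotent on $\ufg$ (since $f_{\beta_i}$ is a nilpotent element of the semisimple Lie algebra $\fg$, hence $\mathrm{ad}$-nilpotent on $\fg$, and a derivation locally nilpotent on a set of generators is automatically locally nilpotent on the whole algebra); and the $f_{\beta_i}$ pairwise commute, because they all lie in the abelian subspace $\fg_{-1}$ of the minimal grading \eqref{eqn:goodgrading}.

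Given $a\in\ann{\fmdl_\lam}{\ufg}$ and $\vec n=(n_1,\ldots,n_\ell)\in\Znn^\ell$, I would prove $a\cdot f^{-\vec n}v=0$ in $S^{-1}\fmdl_\lam$ for all $v\in\fmdl_\lam$ by induction on the number of nonzero entries of $\vec n$. For the inductive step, fix $i$ with $n_i>0$, choose $N_i$ with $(\mathrm{ad}\,f_{\beta_i})^{N_i}(a)=0$, and set $m=n_i+N_i-1$. The Leibniz-type identity
$$f_{\beta_i}^m a \;=\; \sum_{k=0}^{N_i-1}\binom{m}{k}(\mathrm{ad}\,f_{\beta_i})^k(a)\,f_{\beta_i}^{m-k}$$
rearranges, by factoring $f_{\beta_i}^{n_i}$ out on the right (which is legal exactly because $m-k-n_i\ge 0$), into $f_{\beta_i}^m a=b\,f_{\beta_i}^{n_i}$, where
$$b \;=\; \sum_{k=0}^{N_i-1}\binom{m}{k}(\mathrm{ad}\,f_{\beta_i})^k(a)\,f_{\beta_i}^{m-k-n_i}\;\in\;\ufg.$$
Equivalently, $a\,f_{\beta_i}^{-n_i}=f_{\beta_i}^{-m}\,b$ inside the localisation $S^{-1}\ufg$. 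The expansion $(\mathrm{ad}\,f_{\beta_i})^k(a)=\sum_{j=0}^k(-1)^j\binom{k}{j}f_{\beta_i}^{k-j}a\,f_{\beta_i}^j$ exhibits each coefficient as a sum of elements of $\ufg$ ending in $a\,f_{\beta_i}^j$, all of which annihilate $\fmdl_\lam$; thus each $(\mathrm{ad}\,f_{\beta_i})^k(a)$, and hence $b$ itself, still lies in $\ann{\fmdl_\lam}{\ufg}$. Now apply $a\,f_{\beta_i}^{-n_i}=f_{\beta_i}^{-m}\,b$ to $w:=f^{-\vec n'}v$, where $\vec n'$ is obtained from $\vec n$ by zeroing the $i$-th entry (using commutativity of the $f_{\beta_j}$ to reorder factors freely). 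The inductive hypothesis applied to $b$ and $\vec n'$ gives $b\cdot w=0$, whence $a\cdot f^{-\vec n}v=f_{\beta_i}^{-m}\cdot 0=0$ in $S^{-1}\fmdl_\lam$.

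The main bookkeeping hurdle is ensuring that the coefficient $b$ is a genuine element of $\ufg$ rather than of the localisation $S^{-1}\ufg$; this is what forces the choice $m\ge n_i+N_i-1$ and relies crucially on the local nilpotence of $\mathrm{ad}\,f_{\beta_i}$. Once that is in hand, stability of $\ann{\fmdl_\lam}{\ufg}$ under $\mathrm{ad}\,f_{\beta_i}$ is immediate, and the induction on the number of nonzero entries of $\vec n$ runs mechanically.
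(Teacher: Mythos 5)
Your argument is correct, but it is organised quite differently from the paper's. The paper avoids induction entirely by collapsing the whole localising set to the single element $g = f_{\beta_1}\cdots f_{\beta_\ell}$. Given $v\in S^{-1}\fmdl_\lam$, one picks $r$ with $g^r v\in\fmdl_\lam$ and $t$ with $\mathrm{ad}(g^r)^t(a)=0$; the telescoping identity
\[
g^{rt}\,a\,v \;=\; g^{r(t-1)}\,\mathrm{ad}(g^r)(a)\,v \;=\;\cdots\;=\;\mathrm{ad}(g^r)^t(a)\,v \;=\;0
\]
(each dropped term vanishes because $g^r v\in\fmdl_\lam$ and the annihilator is $\mathrm{ad}$-stable) then gives $av=0$ by injectivity of $g$ on the localisation. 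Your version does the same three things --- local nilpotence of the adjoint action, stability of the annihilator under it, and invertibility on the localisation --- but carries them out coordinate by coordinate via the binomial rearrangement $f_{\beta_i}^m a = b\,f_{\beta_i}^{n_i}$ and an induction on the number of nonzero entries of $\vec n$. That works, provided you state the inductive hypothesis uniformly over all $a\in\ann{\fmdl_\lam}{\ufg}$ (as you effectively do, since the element $b$ appearing at each step is new), and provided you observe, as you do implicitly, that elements of the form $f^{-\vec n}v$ span $S^{-1}\fmdl_\lam$. The paper's single-element trick is shorter and avoids the bookkeeping of the exponent $m=n_i+N_i-1$ that you correctly flag as the delicate point; your componentwise version is more explicit and perhaps more transparent about exactly where local $\mathrm{ad}$-nilpotence enters. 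Either is a valid proof. One small point worth noting: your remark that the reverse inclusion needs $\lam\in\fwlat^{0,+}\setminus\fwlat^+$ is accurate --- this is precisely what guarantees $\fmdl_\lam\hookrightarrow S^{-1}\fmdl_\lam$ --- and the paper makes the same (tacit) assumption.
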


\begin{proof}
Since $\fmdl_\lam\subset S^{-1}\fmdl_\lam$, we have $\ann{\fmdl_\lam}{\ufg}\supset\ann{S^{-1}\fmdl_\lam}{\ufg}$.
Now, let $a$ be an element of $\ann{\fmdl_\lam}{\ufg}$ and $v$ an element of $S^{-1}\fmdl_\lam$.
It suffices to show $av=0$. 
Setting $g=f_1\cdots f_\ell$,
 we have $r\geq0$ such that $g^rv\in \fmdl_\lam$.
By ad-nilpotency of $\ufg$, we have $t\geq0$ which satisfies $ad(g^r)^t(a)=0$.
Then we have
\begin{align*}
g^{rt} av&=g^{r(t-1)}ad(g^r)(a)v+g^{r(t-1)}ag^rv=g^{r(t-1)}ad(g^r)(a)v\\
&=\cdots=ad(g^r)^t(a)=0.
\end{align*}
Since $g$ acts injectively on $S^{-1}\fmdl_\lam$, we have $av=0$, as desired.
\end{proof}

\begin{lemma}\label{sec:anntw}
Let $\fmdm$ be a $S^{-1}\ufg$-module and $x$ an element of $\CC^\ell$. 
Then $\ann{\fmdm}{\ufg}=\ann{f^x.\fmdm}{\ufg}$.
\end{lemma}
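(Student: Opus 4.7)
The plan is to establish both inclusions by a direct conjugation argument. The key structural observation is that $\ann{\fmdm}{\ufg}$ is a two-sided ideal of $\ufg$, and that the twist operation can be unwound via the commutator identity already exploited in the proof of \Cref{sec:generates}.

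First I would note that, for $a \in \ufg$ and $w \in \fmdm$, the action on the twisted element satisfies $a . (f^x . w) = f^x . \bigl((f^{-x} a f^x) . w\bigr)$, so $a$ annihilates $f^x . \fmdm$ precisely when $f^{-x} a f^x$ annihilates $\fmdm$ (interpreted inside $S^{-1} \ufg$, which makes sense since $\fmdm$ is an $S^{-1} \ufg$-module). I would then expand $f^{-x} a f^x$ by the same formal-exponential computation used in \Cref{sec:generates}, obtaining a sum whose terms are of the form (scalar in $x$) times $\mathrm{ad}(f_1)^{j_1} \cdots \mathrm{ad}(f_\ell)^{j_\ell}(a) \cdot f_1^{-j_1} \cdots f_\ell^{-j_\ell}$. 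This sum is \emph{finite} because each $\mathrm{ad}(f_i)$ acts locally nilpotently on $\ufg$; I would also note that in type $A_\ell$ the generators $f_{\beta_i}$ commute pairwise, since no sum $\beta_i + \beta_j$ ($i \neq j$) is a root, so there is no ambiguity in the expansion.

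For the forward inclusion $\ann{\fmdm}{\ufg} \subset \ann{f^x.\fmdm}{\ufg}$: when $a$ lies in the annihilator, every iterated commutator $\mathrm{ad}(f_1)^{j_1} \cdots \mathrm{ad}(f_\ell)^{j_\ell}(a)$ still lies in the two-sided ideal $\ann{\fmdm}{\ufg}$, because $\mathrm{ad}(f_i)(b) = f_i b - b f_i$ preserves two-sided ideals. Hence each summand sends $\fmdm$ to zero once the $f_i^{-j_i}$ factors have acted, and the conclusion follows. The reverse inclusion is then immediate by symmetry: applying the same argument to the module $f^x . \fmdm$ with the shift $-x$ and using $f^{-x} . (f^x . \fmdm) = \fmdm$ yields $\ann{f^x . \fmdm}{\ufg} \subset \ann{\fmdm}{\ufg}$.

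There is no genuine obstacle here; the proof is essentially a direct unwinding of the conjugation formula already present in the proof of \Cref{sec:generates}. The only point meriting a brief verification is that the expansion of $f^{-x} a f^x$ is legitimate for non-integer $x_i$, which is precisely the formal-exponential mechanism invoked in \Cref{sec:mathieu} and is justified by the local nilpotency of each $\mathrm{ad}(f_i)$ on $\ufg$.
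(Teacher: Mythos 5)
Your proof is correct and follows essentially the same route as the paper: reduce to one inclusion via $f^{-x}.f^x.\fmdm\cong\fmdm$, conjugate $a$ to $f^{-x}af^x$, expand by the formal-exponential formula, and observe that each $\mathrm{ad}(f_1)^{j_1}\cdots\mathrm{ad}(f_\ell)^{j_\ell}(a)$ remains in the two-sided ideal $\ann{\fmdm}{\ufg}$ while the shifted vector $f_\ell^{-j_\ell}\cdots f_1^{-j_1}v$ stays in $\fmdm$. The extra remarks (finiteness of the sum from ad-nilpotency, commutativity of the $f_{\beta_i}$) usefully make explicit points the paper leaves implicit, but the underlying argument is the same.
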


\begin{proof}
Since $f^{-x}.f^x.\fmdm\cong \fmdm$, it suffices to show $\ann{\fmdm}{\ufg}\subset \ann{f^x.\fmdm}{\ufg}$.
Let $a$ be an element of $\ann{\fmdm}{\ufg}$ and $s^x.v$ an element of $f^x.\fmdm$ with $v\in \fmdm$.
Then 
\begin{align*}
af^x.v&=f^x(f^{-x}af^x)v\\
&=f^x.\sum_{j_1,\ldots,j_\ell=0}^\infty
\begin{pmatrix}x_1\\j_1\end{pmatrix}\cdots \begin{pmatrix}x_\ell \\ j_\ell\end{pmatrix}ad(f_1)^{j_1}\cdots ad(f_\ell)^{j_\ell}(a)f_\ell^{-j_\ell}\cdots s_1^{-j_1}v\\
&=0.
\end{align*}
Thus, we have the assertion.
\end{proof}

Combining \cref{sec:annloc,sec:anntw}, we have proved \cref{sec:duflo}.

\end{document}